\DeclareMathOperator\erf{erf}
\DeclareMathOperator\sgn{sgn}
\def\@ythm#1#2#3[#4]{\def\@currentlabelname{#4}%
  \expandafter\global\expandafter\def\csname#1name\endcsname{#4}%
  \@opargbegintheorem{#3}{\csname the#2\endcsname}{#4}%
  \ifx\thm@starredenv\@undefined
    \thm@thmcaption{#1}{{#3}{\csname the#2\endcsname}{#4}}\fi
  \ignorespaces}
\theoremstyle{plain}
\newtheorem{theorem}{Theorem}[section]
\theoremstyle{definition}
\newtheorem{example}[theorem]{Example}
\newtheorem{prop}[theorem]{Proposition}
\theoremstyle{remark}
\newcommand{\tc}{\text{:}} % tc=tight colon, for use in math mode in Newick trees
\begin{document}

\begin{frontmatter}

\title{Hypothesis testing near singularities and boundaries}
\runtitle{Hypothesis testing near singularities and boundaries}
 \author{\fnms{Jonathan D.} \snm{Mitchell},
\corref{}\ead[label=e1]{jdmitchell5@alaska.edu; esallman@alaska.edu; jarhodes2@alaska.edu}}
\author{\fnms{Elizabeth S.} \snm{Allman},}
\and
\author{\fnms{John A.} \snm{Rhodes}}
\address{Department of Mathematics \& Statistics\\ University of Alaska Fairbanks\\ Fairbanks, Alaska 99775, USA\\}

% Address line causes some problems for an unknown reason. If the document fails to compile, then delete \printead{e1}, then compile the document, then put \printead{e1} back in, then compile the document again.

\runauthor{J. D. Mitchell, et al.}

\begin{abstract}
The likelihood ratio statistic, with its asymptotic $\chi^2$ distribution at regular model points, is often used for hypothesis testing. At model singularities and boundaries, however, the asymptotic distribution may not be $\chi^2$, as highlighted by recent work of Drton. Indeed, poor behavior of a $\chi^2$ for testing near singularities and boundaries is apparent in simulations, and can lead to conservative or anti-conservative tests. Here we develop a new distribution designed for use in hypothesis testing near singularities and boundaries, which asymptotically agrees with that of the likelihood ratio statistic. For two example trinomial models, arising in the context of inference of evolutionary trees, we show the new distributions outperform a $\chi^2$.
\end{abstract}

% Say something in abstract about conservative and anti-conservative tests.

\begin{keyword}[class=MSC]
\kwd[Primary ]{62E17}
\kwd[; secondary ]{92D15}
\end{keyword}

\begin{keyword}
\kwd{hypothesis testing}
\kwd{singularity}
\kwd{boundary}
\kwd{likelihood ratio statistic}
\kwd{chi-squared}
\kwd{phylogenomics}
\kwd{coalescent}
\end{keyword}

\end{frontmatter}

\section{Introduction}

\label{introduction}

The {likelihood ratio statistic} is commonly used to compare a {null model} to an {alternative model}. 
In many circumstances this statistic is asymptotically $\chi^2$-distributed, which greatly facilitates testing of large data sets. 
As is well known, for smaller data sets, or when there are few observations of some outcomes,  a $\chi^2$ 
approximation may not be close enough to the true distribution for reliable testing. Even for large data sets, however, work of \citet{drton2009} has 
highlighted that problems can arise in using a $\chi^2$ approximation at singularities and 
boundaries of the null model. The correct 
asymptotic distribution can be quite different from standard distributions at nearby regular points.

Drton's work shows how one can understand and often calculate an asymptotic distribution at boundaries and singularities, 
but it does not suggest how to use these distributions in practice. Indeed, this is a difficult question, as the 
nature of these asymptotic distributions make clear. For instance, one may find that an asymptotic distribution is 
$\chi_d^2$ with a fixed $d$ degrees of freedom at almost all model points, but that at a boundary or singularity 
it discontinuously jumps to a different distribution 
--- for instance, a mixture of several $\chi^2$ distributions,
or something more complicated.
However, for  the true non-asymptotic distribution, for any fixed sample size no matter how large, we do not expect such a jump to occur.

One might surmise that the asymptotics \emph{at} the singularity or boundary could be relevant to testing even 
{when the true parameter value is} \emph{near} that point, for fixed sample sizes.   As the sample size is increased, the region on which the asymptotics give poor approximations shrinks, but no matter how large a sample is, 
the discontinuous behavior of the asymptotic distribution indicates there is some parameter region on which it is 
inappropriate for empirical use. 
What is needed for a practical test is a different approximation, which is dependent on both sample size and parameter value, 
but still tractable to evaluate.

\medskip

In this work we explore this issue of practical testing near a singularity or boundary, using particular examples of hypothesis testing 
that arise in phylogenomics. Phylogenomics is concerned with inferring evolutionary trees relating several different species from 
genomic-scale data. It builds on phylogenetics (the inference of trees based on sequences of a single gene), but brings in 
population-genetic effects that lead to many inferred gene trees differing from the species (or population) tree. 
Basics of the underlying multispecies coalescent model are explained below, though little familiarity with it is 
necessary for this work. It simply provides two motivating examples of nicely structured and accessible 
submodels of a trinomial ($3$-category multinomial) distribution, for which we can investigate behavior of tests 
near singularities and boundaries. 

Using both theory and simulations, we investigate distributions relevant to hypothesis testing with the 
likelihood ratio statistic for these models.  We illustrate the problematic behavior of a $\chi^2$ distribution near 
boundaries and singularities, even when the sample size is large. We define an alternative 
approximating distribution, and show that it leads to better testing for these models. 
While applications of the material developed here are highly relevant to phylogenomic practice, we defer 
discussion for empiricists to a later paper.

\smallskip

This paper is organized as follows.  In Section~\ref{examples} we lay out basic definitions, and illustrate 
with a simple example the problems that might arise when $\chi^2$ distributions are used to approximate the 
distributions of likelihood ratio statistics near boundaries and 
singularities of null models.  The specifics of the genomic models motivating our primary examples are then introduced. 

The main theorem is given in Section~\ref{mtheorem}, where an approximating distribution is defined for use in 
hypothesis testing. 
In Sections~\ref{sec:T3}~and~\ref{sec:T1}, we specialize to our examples, giving explicit forms of the finite sample approximating 
distributions. By simulation we show that using the standard $\chi_1^2$ for hypothesis testing gives poor performance 
near a boundary or singularity; in contrast, the 
finite sample distributions
we define perform very well for true parameters 
anywhere in the null model.

In Section~\ref{totalvariation}, we use variation distances between the competing distributions ($\chi_1^2$ and ours) to investigate
the region of the null model where the standard $\chi_1^2$ is good for testing, since this depends both on sample size and proximity
to a singularity or boundary point.  The final section is a discussion of our work and its 
potential for application beyond the examples developed here.

\section{Definitions and examples}
\label{examples}

Let $\Theta$, an open subset of $\mathbb{R}^k$, denote the parameter space for a family of probability distributions, 
and $\theta\in\Theta$ an unknown parameter vector.
Submodels are specified by $\Theta_0\subset\widetilde \Theta\subseteq\Theta$, and we formulate the null hypothesis 
$H_0:\theta\in\Theta_0$, with alternative $H_1:\theta\in\Theta_1=\widetilde \Theta\smallsetminus \Theta_0.$
Given some data set, the \emph{likelihood ratio statistic} is
\begin{equation*}
\Lambda=2\left(\sup_{\theta\in\widetilde\Theta}\ell\left(\theta\right)-\sup_{\theta\in\Theta_{0}}\ell\left(\theta\right)\right),
\end{equation*}
where $\ell\left(\theta\right)=\ell\left(\theta\mid \text{data}\right)$ is the {log-likelihood} function.
Then
$\Lambda\ge 0$, and a large value of $\Lambda$ indicates a substantially larger likelihood that $\theta\in \Theta_1$ 
than $\theta\in \Theta_0$, taken as evidence to reject $H_{0}$. By determining the distribution of $\Lambda$ 
under $H_{0}$, the decision as to how large $\Lambda$ must be for rejection can be quantified.

While  it is commonly assumed that the distribution of the likelihood ratio statistic under $H_0$ is well approximated by a $\chi^{2}$ distribution,  establishing this depends on a number of assumptions. \citet{wilks1938large} provided an early justification for sufficiently regular models defined by hyperplanes. \citet{Chernoff54} extended the result to more general models, elucidating the role of the tangent space to the model, and making clear that asymptotic distributions other than $\chi^2$ can arise. Other works emphasize that the statistic may not be asymptotically $\chi^{2}$-distributed at boundaries of $\Theta_0$ (e.g., \citep{miller1977asymptotic}, \citep{self1987asymptotic} and \citep{shapiro1985asymptotic}). 

Recent  research of \citet{drton2009} has emphasized that singularities pose problems as well. 
An asymptotic distribution of the  statistic can be obtained at these problematic model points, as
the distribution of the squared Euclidean distance between a standard normal sample and the appropriately linearly-transformed tangent cone of $\Theta_{0}$ at the true parameter point $\theta_0$ (Theorem~2.6 of \cite{drton2009}). Informally, the {tangent cone} is the set of all possible tangent vectors when approaching $\theta_{0}$ along all possible paths in $\Theta_0$. The tangent cone generalizes the tangent space which lead to the more familiar $\chi^2$ distributions, but may lack the closure properties of a vector space that holds at smooth points of $\Theta_0$.  \medskip

To precisely define singularities and boundaries, we follow \cite{drton2009}. Assume $\Theta_0$ is a \emph{semialgebraic} subset of $\Theta$. That is, $\Theta_0$ is defined by a finite Boolean combination of polynomial equalities and inequalities,
which ensures Chernoff regularity. The Zariski closure, $\overline{\Theta}_{0}$, of $\Theta_0$ is the smallest algebraic variety (the zero set of a finite set of polynomials) containing $\Theta_0$. This closure is the union of
at most finitely many irreducible varieties, called components, which themselves cannot be expressed as a finite union of proper varieties.

A \emph{singularity} of $\Theta_0$ is then either $a)$ a point in $\Theta_0$ which lies on more than one irreducible component of $\overline{\Theta}_{0}$, or $b)$ a point that lies on only one component, but at which the $n\times m$ Jacobian matrix of the defining equations of that component has lower rank than typical. When a point lies on a single irreducible component $\Theta_0^{i}$, the rank of the Jacobian is generically $m-\dim\left(\Theta_0^{i}\right)$. Lower rank indicates a problem with the notion of dimension at the point.

A subset of $\Theta_{0}$ is said to be \emph{open} if it is the intersection of $\overline{\Theta}_{0}$ with an open subset of $\mathbb{R}^{k}$. The \emph{interior} of $\Theta_{0}$ is the union of 
its open subsets, and the \emph{boundary} of $\Theta_0$ is the complement in $\Theta_0$ of its interior. (In most applications, including all our example models, $\Theta_0$ is closed in
$\Theta$ under the standard topology, and this coincides with the usual definition of topological boundary in $\Theta$.)

Note that the boundary and the set of singularities of a model need not be disjoint.

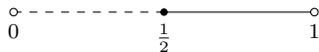
\begin{figure}
\begin{tikzpicture}[xscale=1]
               	\centering
				\draw[-,dashed] (-2,0) -- (0,0);
				\draw[-] (0,0) -- (2,0);
				\draw (-2,0) node[circle,draw=black,fill=white,inner sep=1pt,label=below:{$0$}]{};
				\draw (0,0) node[circle,fill,inner sep=1pt,label=below:{$\frac{1}{2}$}]{};
				\draw (2,0) node[circle,draw=black,fill=white,inner sep=1pt,label=below:{$1$}]{};
				\end{tikzpicture}
     
     \ 
     \caption{ The parameter space for a possibly biased coin. The solid segment is $\Theta_{0}=\left[\frac 12,1\right)$, while  the dashed segment is $\Theta_{1}=\left(0,\frac 12\right)$. } \label{figure1}
\end{figure}

\begin{example}[{\sc Simple model with boundary}]\label{ex:coin}
To test whether a coin, modeled by a Bernoulli random variable with probability of heads $\theta\in\left(0,1\right)$, is 
biased towards tails, formulate hypotheses
\begin{equation*}
H_{0}:\ \theta\geq{}\frac{1}{2},\quad\ \ H_{1}:\ \theta<\frac{1}{2}.
\end{equation*}
Here $\Theta=\left(0,1\right)=\Delta^1$, the open simplex,
and $\Theta_0=\left[\frac{1}{2},1\right)$, as depicted in Figure~\ref{figure1}. 
The Zariski closure of $\Theta_0$ is the real line, and 
$\Theta_0$ has no singularities but a single boundary point $\frac{1}{2}$.
At any $\theta_0$ in the interior of $\Theta_0$ the tangent cone is the full real line, $\left(-\infty,\infty\right)$. However, for $\theta_0 =\frac{1}{2}$
the tangent cone is the half-line $\left[\frac{1}{2},\infty\right)$.

From Theorem~2.6 of \citet{drton2009}, the asymptotic distribution of the likelihood ratio statistic is the distribution of the squared Euclidean distance between a normal random variable centered at $\theta_0$ with variance $1$ and the tangent cone at $\theta_0$.
For $\theta_0>\frac12$, the squared Euclidean distance is $0$ with probability $1$ asymptotically, so the asymptotic distribution is a Dirac delta function $\delta_0$. 
However, for $\theta_0=\frac{1}{2}$ the asymptotic distribution is a mixture $\frac{1}{2}\delta_0+\frac{1}{2}\chi_{1}^{2}$.  Intuitively this is because 
samples from $\mathcal{N}\left(\frac 12,1\right)$ lie on or off the tangent cone $\left[\frac{1}{2},\infty\right)$ with probability $\frac{1}{2}$, and the distributions of the squared distances are $\delta_0$ and $\chi^2_1$ respectively.

For this model, the maximum likelihood estimator 
(MLE) $\hat{\theta}_0$ of the parameter $\theta_0$ is the maximum of $\frac{1}{2}$ and the relative frequency of heads in a sample. If $\theta_{0}$ lies in the 
interior of $\Theta_0$, then for a sufficiently large sample $\hat{\theta}_0$ lies in the interior with probability arbitrarily close to $1$. 
However, for a fixed sample size, no matter how large, there are points $\theta_0$ close to $\frac{1}{2}$ 
but still in the interior of $\Theta_0$ for which this probability is much smaller (in fact, as close to $\frac{1}{2}$ as desired).  
A better approximation to the distribution of the likelihood ratio statistic at such a point might be, for instance, a mixture of $\delta_0$ and the square of a truncated normal centered at $\theta_0$ with variance dependent on sample size. The mixing parameters depend on both $\theta_0$ and the variance, while the truncation point of $\frac{1}{2}$ is not generally the mean of the normal. When $\theta_{0}=\frac{1}{2}$, the normal distribution is truncated at the mean giving the asymptotic mixture distribution already described.

Of course for this model one can simply perform an exact binomial test, without any approximation. Nonetheless, this example highlights 
1) that a likelihood ratio statistic's distribution can fail to converge  uniformly to  a $\chi^{2}$ distribution even on the interior of $\Theta_0$,  
2) the role of the tangent cone in determining correct asymptotics, and 3) the inappropriateness of these asymptotic approximations 
for hypothesis testing for certain parameter values.
\end{example}

\medskip

The next examples are the primary focus of our investigations. We briefly describe their motivation from phylogenomics, with more details
supplied in Appendix~\ref{app:MSC}. The knowledge that these are submodels of a trinomial model is sufficient for the 
remainder of this work.

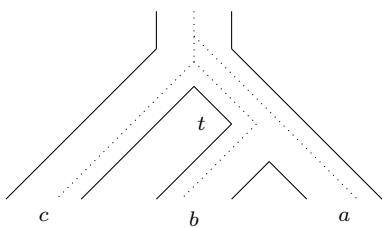
\begin{figure}[!htb]
	\centering
		\begin{tikzpicture}[xscale=1]
        \centering
				\draw[-] (2,0) -- (2,-0.5);
				\draw[-] (3,0) -- (3,-0.5);
				\draw[-] (2,-0.5) -- (0,-2.5);
				\draw[-] (3,-0.5) -- (5,-2.5);
				\draw[-] (2.5,-1) -- (1,-2.5);
				\draw[-] (2.5,-1) -- (3,-1.5);
				\draw[-] (3,-1.5) -- (2,-2.5);
				\draw[-] (3.5,-2) -- (3,-2.5);
				\draw[-] (3.5,-2) -- (4,-2.5);
				\draw[dotted] (2.5,-0.67) -- (0.67,-2.5);
				\draw[dotted] (2.5,0) -- (2.5,-0.67);
				\draw[dotted] (2.5,-0.67) -- (3.33,-1.5);
				\draw[dotted] (3.33,-1.5) -- (2.33,-2.5);
				\draw[dotted] (2.5,-0.33) -- (4.67,-2.5);
                \draw (4.5,-2.5) node[circle,inner sep=1pt,label=below:{$a$}]{};
                \draw (2.5,-2.5) node[circle,inner sep=1pt,label=below:{$b$}]{};
                \draw (0.5,-2.5) node[circle,inner sep=1pt,label=below:{$c$}]{};
                \draw (2.6,-1.25)                node[circle,inner sep=1pt,label=below:{$t$}]{}; \end{tikzpicture}
        \caption{An example of incomplete lineage sorting, where the dotted gene tree topology, $A|BC$, does not match the solid species tree topology, $c|ab$.
        This can occur because gene lineage coalescence events can predate species divergence events, 
        when viewing time backward from the present (upwards).}
        \label{figurecoalescence}
\end{figure}

\begin{example}[{\sc Model T1: Three species related by a specific species tree}]
\label{ex:1tree}
Suppose three species: $a$, $b$ and $c$, are related by a rooted evolutionary species tree as shown in Figure~\ref{figurecoalescence}, 
where the internal branch has length $t\ge 0$.  
Gene trees depicting evolutionary relationships for particular gene lineages ($A$, $B$, $C$) sampled from the three species 
may show differing topological relationships due to the population genetic effect of \emph{incomplete lineage sorting}, 
illustrated in Figure~\ref{figurecoalescence}. 
Under the \emph{multispecies coalescent model} (see Appendix~\ref{app:MSC}), the three possible rooted gene tree topologies  
have probabilities
\begin{equation*}\left(p_{C|AB},\: p_{B|CA}, \:p_{A|BC}\right)=\left(1-\frac 23e^{-t},\: \frac 13 e^{-t},\:\frac 13e^{-t}\right),\end{equation*}
with $C|AB$ denoting the rooted topological gene tree matching the species tree topology with gene lineages $A$ and $B$ 
most closely related, and $B|AC$ and $A|BC$, interpreted analogously, gene tree topologies that do not match that
of the species tree.

For a null hypothesis $H_0$ that the rooted topology of the species tree is $c|ab$, then 
\begin{equation*}
\Theta_0 =\left\{\left(p_1, \; p_2, \; p_3\right) \; \big\vert \; p_1 \geq{} p_2 = p_3 >0, \; \sum_i p_i=1\right \}
 \subset \Delta^2
\end{equation*} 
is shown in Figure~\ref{fig:ex12a}. Here $\Delta^2$ denotes the open 2-dimensional probability simplex. 
The alternative hypothesis, $\Theta_1=\Delta^2\smallsetminus \Theta_0$, can be interpreted either that the species tree has a different 
tree structure $b|ca$ or $a|cb$, or that the model of a simple species tree under the multispecies coalescent is inadequate, 
perhaps due to introgression or hybridization of species populations, population structure within species, or other more complex biological issues.

Samples of $n$ rooted gene trees drawn independently from the multispecies coalescent model on the species tree of Figure~\ref{figurecoalescence} are thus described by a submodel of the trinomial model with parameter space $\Theta_0$.  
\end{example}

\begin{example}[{\sc Model T3: Three species related by any of the three possible species trees}]
\label{ex:3tree}
If the model T1 of Example~\ref{ex:1tree} is modified, so that the specific species tree structure is not fixed, 
but any one of $a|bc$, $b|ac$, or $c|ab$ might be the species tree, then $H_0$ is that there is \emph{some} species tree
giving rise to the gene tree data. 
The alternative $H_1$ is that a simple species tree model does not fit the data.  The null parameter space
$\Theta_0\subset\Delta^2$, shown in Figure~\ref{fig:ex12b}, is the union of three submodels of trinomial models.
\end{example}

As seen in Figure~\ref{fig:ex12a},  the model T1 has a boundary point at $\left(\frac 13,\frac13,\frac13\right) \in \Theta_0$, 
and no singularities. For model T3, the point $\left(\frac 13,\frac13,\frac13\right)$  is a singularity of $\Theta_0$,
since the Zariski closure of $\Theta_0$ is three lines (irreducible components) crossing at that point. 
This point is also a boundary, though we will refer to it simply as the singularity.

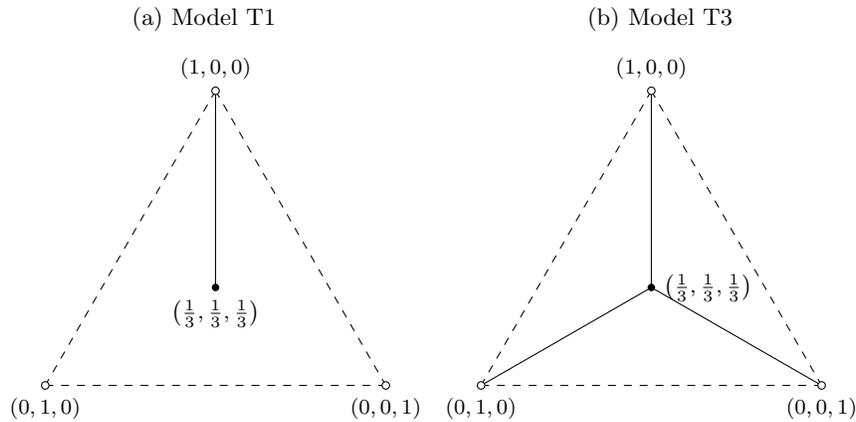
\begin{figure}[!htb]
\begin{subfigure}{0.49\linewidth}
\caption{Model T1}
\label{fig:ex12a}
\hspace*{\fill}
\begin{tikzpicture}[xscale=0.98,yscale=0.98]
				\draw[-] (2.31,0) -- (2.31,-2.67);
				\draw[-,dashed] (2.31,0) -- (0,-4);
				\draw[-,dashed] (2.31,0) -- (4.62,-4);
				\draw[-,dashed] (0,-4) -- (4.62,-4);
				\draw (2.31,0) node[circle,draw=black, fill=white,inner sep=1pt,label=above:{$\left(1,0,0\right)$}]{};
				\draw (2.31,-2.67) node[circle,fill,inner sep=1pt,label=below:{$\left( \frac 13,\frac 13,\frac13\right)$}]{};
                \draw (0,-4) node[circle,draw=black, fill=white,inner sep=1pt,label=below:{$\left(0,1,0 \right)$}]{};
                \draw (4.62,-4) node[circle,draw=black, fill=white,inner sep=1pt,label=below:{$\left(0,0,1\right)$}]{};
\end{tikzpicture}
\end{subfigure}
\hfill
\begin{subfigure}{0.49\linewidth}
\caption{Model T3}
\label{fig:ex12b}
\begin{tikzpicture}[xscale=0.98,yscale=0.98]
				\draw[-] (0,-4) -- (2.31,-2.67);
                \draw[-] (2.31,0) -- (2.31,-2.67);
                \draw[-] (4.62,-4) -- (2.31,-2.67);
                
				\draw[-,dashed] (2.31,0) -- (0,-4);
				\draw[-,dashed] (2.31,0) -- (4.62,-4);
				\draw[-,dashed] (0,-4) -- (4.62,-4);
				\draw (2.31,0) node[circle,draw=black, fill=white,inner sep=1pt,label=above:{$\left(1,0,0\right)$}]{};
				\draw (2.31,-2.67) node[circle,fill,inner sep=1pt,label=right:{$\left( \frac 13,\frac 13,\frac13\right)$}]{};
                \draw (0,-4) node[circle,draw=black, fill=white,inner sep=1pt,label=below:{$\left(0,1,0 \right)$}]{};
                \draw (4.62,-4) node[circle,draw=black, fill=white,inner sep=1pt,label=below:{$\left(0,0,1\right)$}]{};
				\end{tikzpicture}                
\end{subfigure}
\hspace*{\fill}
\caption{Geometric view of the models: (a)  T1 and (b) T3. The solid line segment(s) represent(s) $\Theta_{0}$, while 
the region inside the dotted lines represents $\Theta$, the open probability simplex $\Delta^2$. The central point 
$\left(\frac 13,\frac 13,\frac 13\right)$ corresponding to $t=0$ on any species tree is either a boundary (T1) or a singularity (T3).}
\label{fig:ex12}
\end{figure}

When a rooted species tree on three species has a short internal branch 
so that much incomplete lineage sorting occurs, the expected gene tree probabilities lie close to the 
boundary or singularity $\left(\frac 13,\frac13,\frac 13\right)$ of the models. 
This is exactly the situation in which it is hardest to resolve species tree relationships, and therefore often one of 
pressing biological interest.  Indeed, motivation for this paper is the recognition that the use of the standard asymptotic 
approximation is not reliable near boundaries and singularities, and a careful investigation of this problem is of 
practical as well as theoretical interest.

The models $T_1$ and $T_3$, and the more general multispecies coalescent model for larger trees, 
are increasingly used in inference of species trees from 
genomic-scale data, though
typically little is done to test whether the model is appropriate for data.  For relating three species,  
\citet{degnan2009gene} describe a hypothesis test using a $\chi^2$ distribution, though our work here underscores 
that this test can be problematic near singularities and boundaries. 
Results of \citet{adr2011} show that this test extends to the unrooted $4$-species trees this paper focuses on, 
though the same boundary and singularity issues arise in using the $\chi^2$.
\citet{gaither2016hypothesis} introduce a different hypothesis test for $4$-species trees, but in a different framework, 
working from DNA sequence data under a combined model of coalescence with sequence evolution, and not
on gene tree frequencies. Most empirical studies simply assume the coalescent model on a species tree is 
appropriate, even though several biological processes are known which could violate it.

\medskip

\section{Approximate distributions of likelihood ratio statistics}
\label{mtheorem}

We now illustrate that, in principle, one can obtain an alternative, potentially more useful, approximation to the distribution 
of the likelihood ratio statistic than the asymptotic one.

\medskip

For a statistical model with parameter spaces $\Theta_0\subset\widetilde \Theta\subseteq \Theta$, $\Theta_1=\widetilde \Theta\smallsetminus \Theta_0$,
and parameter $\theta_0\in \Theta_0$ , 
 let $X^{\left(1\right)},\dots, X^{\left(n\right)}$ denote $n$ independent and identically distributed random observations.
 The likelihood function for a sample realization  $X^{\left(1\right)},\dots, X^{\left(n\right)}$ is 
\begin{equation*}\ell_n\left(\theta\right)=\sum_{i=1}^n \log p\left(x^{\left(i\right)}\mid \theta\right).\end{equation*}
Maximizers of the likelihood over $\Theta_0$ and $\widetilde\Theta$ are the maximum likelihood estimators (MLEs) 
over the corresponding parameter spaces.

The likelihood ratio statistic  for a sample then is
\begin{equation*}\Lambda_n=2\left (\sup_{\theta\in \widetilde\Theta} \ell_n\left(\theta\right)-\sup_{\theta\in \Theta_0} \ell_n\left(\theta\right)\right ).\end{equation*}

Under appropriate regularity conditions (see Theorem 16.7 of \citet{van2000asymptotic}) the asymptotic distribution 
of this statistic is that of 
\begin{equation*}
{\left\Vert{}X-\mathcal I\left(\theta_0\right)^{\frac{1}{2}} T_0\right\Vert{}}^2-{\left\Vert{}X-\mathcal I\left(\theta_0\right)^{\frac{1}{2}} T\right\Vert{}}^2,
\end{equation*}
for $X\sim\mathcal N\left(0,I\right)$,
$\mathcal I\left(\theta_0\right)$ the Fisher information matrix at $\theta_0$, $T_0$ and $T$ the tangent cones to $\Theta_0$ and $\widetilde\Theta$ at $\theta_0$, and where $\left\Vert{x - B}\right\Vert$ denotes the minimal Euclidean distance between a point $x$ and set $B$.
In essence, establishing this theorem using local asymptotic normality depends on two approximations: the likelihood ratio process from sample realizations is approximately normal, and the model parameter space is approximated locally by its tangent cone. 

Of these two approximations, it is that of the tangent cone which leads to the discontinuous behavior of the asymptotic distribution, 
since the tangent cone's features behave discontinuously as a function of the parameter. For example, if a model is parameterized 
by a closed ball in  $\mathbb R^k$, at interior points the tangent space will be a $k$-dimensional Euclidean space, while at the 
boundary it becomes a half-space.
For a model with parameter space a curve in the plane that crosses over itself, the tangent space will be a line at most points, 
but at the singularity it is two crossed lines.

Examining a derivation of the asymptotics of the likelihood ratio statistic more closely, local asymptotic normality allows for the approximation by a normal for large samples. For large samples the distribution's covariance approaches {$0$},
and rescaling to a standard normal means the parameter space must be dilated around the true parameter. It is this dilation that allows the parameter space of the model to be approximated by a tangent cone. Thus these two approximations are interrelated, and are not made independently.

Nonetheless, we informally reason that while the normal approximation may be a good one  even for a relatively small sample size, a much larger sample may be needed for the approximating normal to be sufficiently concentrated that the tangent approximation of the model is accurate. This motivates Theorem~\ref{maintheorem} below.  
 
 \medskip
 
For parameter spaces $\Theta_0\subset \widetilde\Theta\subseteq\mathbb R^k$ and parameter value $\theta_0\in \Theta_0$, define 
sequences of scaled translated parameter spaces $T_n=\sqrt n\left(\widetilde\Theta-\theta_0\right)$ and 
$T_{n,0}=\sqrt n \left(\Theta_0 -\theta_0\right)$. Suppose
$T_n\to T$ and $T_{n,0}\to T_0$ in the sense defined in \cite{van2000asymptotic}. As pointed out by \cite{drton2009}, 
a condition such as Chernoff regularity ensures this convergence of spaces, with $T$ and $T_0$ the tangent spaces 
at $\theta_0$ of $\widetilde\Theta$ and $\Theta_0$.
 
\begin{theorem}
\label{maintheorem}
 Consider {$n$ i.i.d. random observations from} a model with parameter space $\Theta$ open in $\mathbb R^k$ and submodels determined by $\Theta_0\subset\widetilde \Theta\subseteq\Theta$, with $\Theta_1=\widetilde \Theta\smallsetminus\Theta_0$.
Let $\theta_{0}\in{}\Theta_0$ be a true parameter point, with non-singular Fisher information matrix $\mathcal I\left(\theta_0\right)$ for a sample of size $1$. 
Let $\mathcal I\left(\theta_{0}\right)^{\frac{1}{2}}$ be a matrix such that
$\mathcal I\left(\theta_{0}\right)=\left(\mathcal I\left(\theta_{0}\right)^{\frac{1}{2}}\right)^T \mathcal I\left(\theta_{0}\right)^{\frac{1}{2}}$ 
and $Y\sim \mathcal{N}\left(\sqrt{n}\mathcal I\left(\theta_{0}\right)^{\frac{1}{2}}\theta_{0},I\right)$.

Then under the {regularity} assumptions of Proposition~16.7 of \cite{van2000asymptotic}, for a sample of size $n$ the likelihood ratio statistic $\Lambda_n$ for $H_0$ vs. $H_1$ is approximately distributed as the random variable
\begin{equation*}W=\inf_{\tau \in  \sqrt n \mathcal I\left(\theta_0\right)^{\frac{1}{2}} \Theta_0} {\left\Vert{}Y-\tau \right\Vert{}}^2-\inf_{\tau \in  \sqrt n \mathcal I\left(\theta_0\right)^{\frac{1}{2}}\widetilde \Theta} {\left\Vert{}Y-\tau\right\Vert{}}^2,\end{equation*}
in the sense that  both the likelihood ratio statistic and this random variable converge in distribution to the same limit as $n\to \infty$.
\end{theorem}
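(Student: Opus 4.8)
The plan is to show that both $\Lambda_n$ and the statistic $W$ of the theorem --- which depends on $n$ through the scaled parameter spaces, and which I write as $W_n$ to emphasize this --- converge in distribution, as $n\to\infty$, to the common limit
\[
D=\inf_{s\in T_0}{\left\Vert X-\mathcal I\left(\theta_0\right)^{\frac12}s\right\Vert}^{2}-\inf_{s\in T}{\left\Vert X-\mathcal I\left(\theta_0\right)^{\frac12}s\right\Vert}^{2},\qquad X\sim\mathcal N\left(0,I\right),
\]
where $T_0$ and $T$ are the tangent cones of $\Theta_0$ and $\widetilde\Theta$ at $\theta_0$. For $\Lambda_n$ this is exactly the conclusion of Proposition~16.7 of \cite{van2000asymptotic} under the stated regularity hypotheses (non-singular Fisher information, differentiability in quadratic mean, and Chernoff regularity of the two parameter spaces, which --- as recalled just before the theorem, following \cite{drton2009} --- also gives $T_{n,0}\to T_0$ and $T_n\to T$). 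So the work is to prove $W_n\Rightarrow D$.

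First I would rewrite $W_n$ in a form parallel to $D$. Since $Y\sim\mathcal N\!\left(\sqrt n\,\mathcal I\left(\theta_0\right)^{\frac12}\theta_0,\,I\right)$, write $Y=\sqrt n\,\mathcal I\left(\theta_0\right)^{\frac12}\theta_0+X$ with $X\sim\mathcal N\left(0,I\right)$. For $\theta\in\Theta_0$ and $\tau=\sqrt n\,\mathcal I\left(\theta_0\right)^{\frac12}\theta$,
\[
{\left\Vert Y-\tau\right\Vert}^{2}={\left\Vert X-\sqrt n\,\mathcal I\left(\theta_0\right)^{\frac12}\left(\theta-\theta_0\right)\right\Vert}^{2},
\]
and as $\theta$ ranges over $\Theta_0$ the vector $\sqrt n\left(\theta-\theta_0\right)$ ranges over $T_{n,0}=\sqrt n\left(\Theta_0-\theta_0\right)$; the same computation applies with $\widetilde\Theta$ and $T_n$. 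Hence
\[
W_n=\inf_{s\in T_{n,0}}{\left\Vert X-\mathcal I\left(\theta_0\right)^{\frac12}s\right\Vert}^{2}-\inf_{s\in T_n}{\left\Vert X-\mathcal I\left(\theta_0\right)^{\frac12}s\right\Vert}^{2},
\]
so in particular the large mean $\sqrt n\,\mathcal I\left(\theta_0\right)^{\frac12}\theta_0$ cancels and $W_n$ depends only on the local, rescaled geometry. Since $\theta_0\in\Theta_0\subseteq\widetilde\Theta$ we have $0\in T_{n,0}\subseteq T_n$ for every $n$, whence each infimum lies in $\left[0,\left\Vert X\right\Vert^{2}\right]$, every near-minimizer can be taken in a ball whose radius depends only on $\left\Vert X\right\Vert$, and $W_n\ge 0$.

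Next I would pass to the limit with all the $W_n$ coupled on one realization of $X$. Writing $A=\mathcal I\left(\theta_0\right)^{\frac12}$ --- a linear isomorphism of $\mathbb R^k$ because $\mathcal I\left(\theta_0\right)$ is non-singular --- the convergences $T_{n,0}\to T_0$ and $T_n\to T$ carry over to $A\,T_{n,0}\to A\,T_0$ and $A\,T_n\to A\,T$ in the same set-convergence sense. For a fixed $x$ the functional $B\mapsto d\left(x,B\right)^{2}=\inf_{b\in B}\left\Vert x-b\right\Vert^{2}$ is continuous along this convergence of nonempty closed sets (and $d\left(x,B\right)=d\left(x,\overline B\right)$, so the non-closedness of $\Theta_0$ is harmless), because any near-minimizer is trapped in a fixed ball by the bound above and on a fixed ball the convergence behaves as required. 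Therefore, for every realization of $X$, both infima in $W_n$ converge to the corresponding infima over $T_0$ and $T$, so $W_n\to D$ almost surely and hence in distribution. Combined with $\Lambda_n\Rightarrow D$, this is the assertion.

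The step I expect to be the main obstacle is the continuity of the squared-distance functional, since the sets $A\,T_{n,0}$ and $A\,T_n$ are unbounded cones and set convergence on its own does not control distances uniformly at infinity; the fix is precisely the localization above, using that $0$ lies in all the sets. A secondary, more bookkeeping, point is to confirm that the hypotheses of Proposition~16.7 of \cite{van2000asymptotic} simultaneously supply the local asymptotic normality behind $\Lambda_n\Rightarrow D$ and the set convergences $T_{n,0}\to T_0$, $T_n\to T$ behind $W_n\Rightarrow D$, so that the two halves of the argument rest on the same footing.
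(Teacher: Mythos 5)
Your proposal is correct and follows essentially the same route as the paper: invoke Theorem~16.7 of \cite{van2000asymptotic} for $\Lambda_n$, recenter $Y$ by its mean so that the infima over $\sqrt n\,\mathcal I\left(\theta_0\right)^{\frac12}\Theta_0$ and $\sqrt n\,\mathcal I\left(\theta_0\right)^{\frac12}\widetilde\Theta$ become infima over $\mathcal I\left(\theta_0\right)^{\frac12}T_{n,0}$ and $\mathcal I\left(\theta_0\right)^{\frac12}T_n$, and then pass to the limit using $T_{n,0}\to T_0$ and $T_n\to T$. The only difference is that where you verify by hand that this set convergence yields convergence of the squared distances (via your localization argument using $0\in T_{n,0}\subseteq T_n$), the paper simply cites Lemma~7.13 of \cite{van2000asymptotic} for that final step.
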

\begin{proof}
By Theorem~16.7 of \cite{van2000asymptotic}, the likelihood ratio statistic converges in distribution to
\begin{equation*}{\left\Vert{}X-\mathcal I\left(\theta_0\right)^{\frac{1}{2}} T_0\right\Vert{}}^2-{\left\Vert{}X-\mathcal I\left(\theta_0\right)^{\frac{1}{2}} T\right\Vert{}}^2,\end{equation*}
for $X\sim\mathcal N\left(0,I\right)$.

However, with $Y = X + \sqrt{n} {\mathcal I \left(\theta_0\right)}^{\frac 1 2} \theta_0$,
\begin{align*}
W=&\inf_{\tau \in  \sqrt n \mathcal I\left(\theta_0\right)^{\frac{1}{2}} \Theta_0} {\left\Vert{} X+\sqrt{n}\mathcal I\left(\theta_{0}\right)^{\frac{1}{2}}\theta_0-\tau \right\Vert{}}^2 \\
&\quad{}-\inf_{\tau \in  \sqrt n \mathcal I\left(\theta_0\right)^{\frac{1}{2}} \widetilde \Theta} {\left\Vert{} X+\sqrt{n}\mathcal I\left(\theta_{0}\right)^{\frac{1}{2}}\theta_0-\tau\right\Vert{}}^2 \\
=&\inf_{\tau \in  T_{n,0}} {\left\Vert{}X-\mathcal I\left(\theta_{0}\right)^{\frac{1}{2}}\tau \right\Vert{}}^2-\inf_{\tau \in  T_n} {\left\Vert{}X-\mathcal I\left(\theta_{0}\right)^{\frac{1}{2}}\tau\right\Vert{}}^2\\
=&{\left\Vert{}X-\mathcal I\left(\theta_{0}\right)^{\frac{1}{2}}T_{n,0} \right\Vert{}}^2-{\left\Vert{}X-\mathcal I\left(\theta_{0}\right)^{\frac{1}{2}}T_n\right\Vert{}}^2.
\end{align*}
Since $T_n\to T$ and $T_{n,0}\to T_0$, applying Lemma~7.13 of \cite{van2000asymptotic} yields the result.
\end{proof}

Note that the condition that the sample is i.i.d. is not necessary in the theorem; a more general result is possible
if $\sqrt{n} {\mathcal I\left(\theta_0\right)}^{\frac 1 2}$ is
replaced with the square root of the Fisher information matrix for a sample of size $n$. 

Moreover, this theorem offers no measure of accuracy of the approximation for any finite sample size, and thus does not 
indicate whether it gives a better approximation than the asymptotic one in practice.   This is typical of results on approximate 
distributions of test statistics.  
To highlight the theorem's potential for improved testing, in subsequent sections we present simulation results indicating
that this distribution outperforms the asymptotic one in our example models T1 and T3. 

\medskip

Though the above theorem is stated for the likelihood ratio statistic, this is but one member of the \emph{power-divergence family} of goodness-of-fit statistics of \citet{cressie1984multinomial}. For multinomially distributed data,  with appropriate assumptions on the null model, all members of the family converge in distribution to the same asymptotic distribution. Thus the theorems and results in this paper are potentially useful for all members of the family. Although
the Neyman-Pearson lemma (\citet{neyman1933problem}) states that the likelihood ratio test is the uniformly most powerful test for simple hypotheses, \citet{cressie1989pearson} highlighted that in other scenarios other family members, such as Pearson's chi-squared statistic, may be better approximated by a $\chi^{2}$ distribution than the likelihood ratio statistic is. It is of interest to investigate the use of the distribution of Theorem~\ref{maintheorem} for these other statistics.

\smallskip

In practice, $\theta_{0}$ and $\mathcal{I}\left(\theta_{0}\right)$ are estimated using the MLE 
$\widehat{\theta}_{0}$. \citet{florescu2014probability} states that a regular exponential family has 
consistent MLEs, and members of a 
regular exponential family satisfy the regularity conditions of \citet{drton2009}. 
However, the approximate distribution of the likelihood ratio statistic in Theorem~\ref{maintheorem} may not be 
accurate for small sample sizes and a consistent $\widehat{\theta}_{0}$ may still be biased for finite samples, in 
which cases attempts should be made to correct the bias.

We emphasize that Theorem~\ref{maintheorem} is focused on obtaining a useful approximate distribution 
near singularities and boundaries of 
$\Theta_0$ within the open parameter space ${\Theta}\subseteq \mathbb R^k$. Close to the topological boundary 
of ${\Theta}$ in $\mathbb R^k$, 
both the approximate distribution of Theorem~\ref{maintheorem} and the $\chi^2$ may perform 
poorly for tests, even with a large sample. This occurs for the models T1 and T3, where ${\Theta}=\Delta^2$, if the 
true parameter is near the vertices of the triangle bounding the simplex. Then frequencies of two tree topologies may be very low, 
and the normal approximation is poor.
When this occurs, other methods such as exact tests or parametric bootstrapping may be used instead. 

\section{Application to Model T3}\label{sec:T3}

We now apply Theorem~\ref{maintheorem} to determine an approximate distribution for the likelihood ratio statistic when 
testing the model T3 vs. an alternative of ``no-species-tree''.
More formally, for $t^{\left(i\right)}$ the branch length in species tree $i\in\left\{1,2,3\right\}$ and taking $\phi_{0}^{\left(i\right)}=e^{-t^{\left(i\right)}} \in \left(0,1\right]$,
the hypotheses are:
\begin{align*}
H_{0}\tc \quad{}\Theta_{0}&=\left\{\left(1-\frac{2}{3}\phi_{0}^{\left(1\right)}, \:\frac{1}{3}\phi_{0}^{\left(1\right)}, \:\frac{1}{3}\phi_{0}^{\left(1\right)}\right)\right\}\cup{}\left\{\left(\frac{1}{3}\phi_{0}^{\left(2\right)}, \:1-\frac{2}{3}\phi_{0}^{\left(2\right)}, \:\frac{1}{3}\phi_{0}^{\left(2\right)}\right)\right\} \\
\noalign{\centering{$\cup{}\left\{\left(\frac{1}{3}\phi_{0}^{\left(3\right)}, \:\frac{1}{3}\phi_{0}^{\left(3\right)}, \:1-\frac{2}{3}\phi_{0}^{\left(3\right)}\right)\right\}$,}}
H_{1}\tc \quad{}\Theta_{1}&=\Delta^2\smallsetminus \Theta_0.
\end{align*}

We view the model $\widetilde{\Theta}=\Theta_0\cup\Theta_1 = \Delta^2$ as a subset of $\mathbb{R}^{2}$ through an appropriate affine transformation (see Appendix~\ref{app1} for full details) which maps the singularity of $\Theta_0$ to the origin and the 
true parameter point $\theta_0 = \left(1-\frac{2}{3}\phi_{0}, \:\frac{1}{3}\phi_{0}, \:\frac{1}{3}\phi_{0}\right)$, without loss of generality, to a point $\left(0,\mu_0\right)$ as in Figure~\ref{transformedSimplex}.  This affine transformation scales the simplex so that the 
normally distributed variable $Y$ of Theorem~\ref{maintheorem} now has mean $\left(0,\mu_0\right)$ and identity covariance, where $\mu_0$ is measured in standard deviations from the singularity and can be interpreted analagously for model T1.
Unless $\theta_{0}=\left(\frac{1}{3},\frac{1}{3},\frac{1}{3}\right)$ the affine transformation 
does not preserves angles.  For other parameter values $\theta_0$, the angle $\alpha_0$ shown in Figure~\ref{transformedSimplex} is less than $\frac{\pi}{6}$.

We make one additional simplification, valid under the assumption that $\theta_0$ is far from the triangle bounding the simplex
$\widetilde{\Theta}$, in a sense dependent on the sample size: the mass of the normal distribution 
of $Y$ outside the image of $\widetilde{\Theta}$ is negligible.   This leads to the following proposition  which is proved 
in Appendix~\ref{app1}.

\begin{prop} \label{prop:T3approx} 
For model T3, the likelihood ratio statistic for testing $H_0$ vs. $H_1$ at a true parameter point $\theta_0= \left(1-\frac{2}{3}\phi_{0}, \:\frac{1}{3}\phi_{0}, \:\frac{1}{3}\phi_{0}\right)$ with sample size $n$ is approximately distributed as the random variable
\begin{equation}\label{eq:T3dis}
\widetilde{\Lambda}_n= \min\left(Z^{2}+\frac{1}{2}\left(1-\sgn\left(\bar{Z}\right)\right)\bar{Z}^{2}, \:\,\sin^2\alpha_0\left (Z+\cot\alpha_0\sgn\left(Z\right)\bar{Z}\right )^2\right),
\end{equation}
where $Z\sim{}\mathcal{N}\left(0,1\right)$, \: $\bar{Z}\sim{}\mathcal{N}\left(\mu_{0},1\right)$, \: $\mu_{0}=\sqrt{2n}\frac{1-\phi_{0}}{\sqrt{\phi_{0}\left(3-2\phi_{0}\right)}}$ and \\
$\alpha_0=\arctan \left( \frac{1}{\sqrt{3\left(3-2\phi_{0}\right)}}\right)$.
\end{prop}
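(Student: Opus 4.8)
The plan is to derive the stated law directly from Theorem~\ref{maintheorem}, reducing everything to a planar distance-to-a-union-of-rays computation together with two evaluations of the Fisher information. First I would apply Theorem~\ref{maintheorem} to write $\Lambda_n$ as approximately distributed as $W=\inf_{\tau\in\sqrt n\,\mathcal I(\theta_0)^{1/2}\Theta_0}\|Y-\tau\|^2-\inf_{\tau\in\sqrt n\,\mathcal I(\theta_0)^{1/2}\widetilde\Theta}\|Y-\tau\|^2$ with $Y\sim\mathcal N(\sqrt n\,\mathcal I(\theta_0)^{1/2}\theta_0,\,I)$. I would then pass to the affine coordinates of Appendix~\ref{app1}: that map is $\theta\mapsto R\sqrt n\,\mathcal I(\theta_0)^{1/2}\theta+b$ for a suitable rotation $R$ and vector $b$, i.e.\ the linear map $\sqrt n\,\mathcal I(\theta_0)^{1/2}$ of the theorem followed by a rigid motion. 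Since a rigid motion is an isometry of $\mathbb R^2$, it preserves $W$ (a difference of squared distances) and carries $Y$ to a Gaussian with identity covariance and mean $(0,\mu_0)$. In the new coordinates the singularity $(\tfrac13,\tfrac13,\tfrac13)$ is at the origin, $\theta_0$ is at $(0,\mu_0)$, the coordinates of $Y$ are independent variables $Z\sim\mathcal N(0,1)$ and $\bar Z\sim\mathcal N(\mu_0,1)$, the image of $\widetilde\Theta=\Delta^2$ is a (large) triangle, and the image of $\Theta_0$ consists of three segments issuing from the origin: one running up the positive $\bar Z$-axis through $(0,\mu_0)$, and two that are mirror images of each other across that axis, each making angle $\alpha_0$ with the horizontal axis.

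Next I would invoke the simplification announced just before the proposition: since $\theta_0$ is assumed far from the triangle bounding the simplex in units of sample size, the mass of $Y$ outside the image of $\widetilde\Theta$ is negligible, so $\inf_{\tau\in\sqrt n\,\mathcal I(\theta_0)^{1/2}\widetilde\Theta}\|Y-\tau\|^2\approx 0$, and by the same token each of the three segments may be replaced by the ray it spans, the origin remaining a genuine endpoint of each; call these rays $R_1,R_2,R_3$. Thus $W\approx\operatorname{dist}(Y,R_1\cup R_2\cup R_3)^2=\min\!\big(\operatorname{dist}(Y,R_1)^2,\ \operatorname{dist}(Y,R_2\cup R_3)^2\big)$. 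Projecting $(Z,\bar Z)$ onto $R_1=\{(0,y):y\ge 0\}$ gives $\operatorname{dist}(Y,R_1)^2=Z^2$ when $\bar Z\ge 0$ and $Z^2+\bar Z^2$ when $\bar Z<0$, i.e.\ $Z^2+\tfrac12(1-\sgn(\bar Z))\bar Z^2$. For the other term, the reflection symmetry of $R_2,R_3$ across the $\bar Z$-axis lets me fold $Z\mapsto|Z|$ and reduce to the distance from $(|Z|,\bar Z)$ to the single ray $R_3$ with unit direction $(\cos\alpha_0,-\sin\alpha_0)$; a one-line projection computation gives $\operatorname{dist}((|Z|,\bar Z),R_3)^2=(|Z|\sin\alpha_0+\bar Z\cos\alpha_0)^2=\sin^2\alpha_0\big(Z+\cot\alpha_0\,\sgn(Z)\,\bar Z\big)^2$ whenever the perpendicular foot lands on $R_3$. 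When it does not, the foot is the origin and the true value is $Z^2+\bar Z^2$; but that case forces $\bar Z>|Z|\cot\alpha_0\ge 0$, whence on one hand $\operatorname{dist}(Y,R_1)^2=Z^2$ and on the other $\sin^2\alpha_0(Z+\cot\alpha_0\sgn(Z)\bar Z)^2>Z^2$, so the outer minimum is unaffected by using $\sin^2\alpha_0(Z+\cot\alpha_0\sgn(Z)\bar Z)^2$ there as well. Assembling the two terms yields $W\approx\widetilde\Lambda_n$.

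Finally I would pin down the constants by direct computation with the trinomial Fisher information $\mathcal I(\theta_0)$ at $\theta_0=(1-\tfrac23\phi_0,\tfrac13\phi_0,\tfrac13\phi_0)$. Writing $\theta_0-(\tfrac13,\tfrac13,\tfrac13)=\tfrac13(1-\phi_0)(2,-1,-1)$, the translated-scaled distance $\mu_0=\sqrt n\,\|\mathcal I(\theta_0)^{1/2}(\theta_0-(\tfrac13,\tfrac13,\tfrac13))\|$ reduces to the quadratic form $\sqrt{n\,(\theta_0-(\tfrac13,\tfrac13,\tfrac13))^{\!\top}\mathcal I(\theta_0)(\theta_0-(\tfrac13,\tfrac13,\tfrac13))}=\sqrt{2n}\,(1-\phi_0)/\sqrt{\phi_0(3-2\phi_0)}$, while $\alpha_0$ is obtained from the Fisher-metric angle between the direction of the segment of $\Theta_0$ through $\theta_0$ and the direction of either of the other two segments, which works out to $\arctan\!\big(1/\sqrt{3(3-2\phi_0)}\big)$ — in particular $\alpha_0=\pi/6$ when $\phi_0=1$ and $\alpha_0<\pi/6$ otherwise. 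The routine parts are the affine-transformation bookkeeping of Appendix~\ref{app1} and these two Fisher computations; the step that really needs care is the $R_2\cup R_3$ geometry — checking that the substitution above does not disturb the outer $\min$ — together with the (necessarily informal) delimitation of when the ``negligible mass outside $\widetilde\Theta$'' simplification is legitimate, which is exactly what keeps the conclusion an approximation rather than an identity.
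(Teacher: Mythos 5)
Your proposal is correct and follows essentially the same route as the paper's Appendix~\ref{app1}: the same Fisher-rescaled affine change of coordinates sending the singularity to the origin and $\theta_0$ to $(0,\mu_0)$, the same replacement of the three segments by rays under the ``negligible mass outside $\Delta^2$'' assumption, the same two distance formulas (your fold-and-project computation for $R_2\cup R_3$ is just the closed form of the paper's minimization over $X$), and the same check that the foot-at-the-origin case cannot change the outer minimum. The only quibble is descriptive: $\alpha_0$ is the angle the oblique rays make with the \emph{horizontal} axis (as you correctly use it in the geometry), not the angle between the segment directions of $\Theta_0$, which would be $\tfrac{\pi}{2}\pm\alpha_0$; your stated formula and special values are nonetheless the correct ones.
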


\begin{figure}[!htb]
	\centering
		\begin{tikzpicture}[xscale=1]
        \centering
				\draw[-] (2.51,0) -- (2.51,-2.67);
				\draw[-] (2.51,-2.67) -- (0,-3.58);
				\draw[-] (2.51,-2.67) -- (5.02,-3.58);
				\draw[-,dashed] (2.51,0) -- (0,-3.58);
				\draw[-,dashed] (0,-3.58) -- (5.02,-3.58);
				\draw[-,dashed] (5.02,-3.58) -- (2.51,0);
				\draw[-,dotted] (2.51,-2.67) -- (5.38,-2.67);
				\tkzDefPoint(6.02,-2.67){C}
				\tkzDefPoint(2.51,-2.67){B}
				\tkzDefPoint(5.02,-3.58){A}
				\tkzDefPoint(3.51,-1.97){D}
				\tkzDefPoint(2.51,0){E}
				\tkzMarkAngle[arrows=<->](A,B,C)
				\tkzLabelAngle[pos=0.6](A,B,C){$\alpha_0$}
				\draw (2.51,-1.5) node[circle,fill,inner sep=1.5pt,label=right:{$\left(0,\mu_{0}\right)$}]{};
				\draw (2.51,-2.67) node[circle,fill,inner sep=1.5pt,label=left:{$\ \left(0,{0}\right)$}]{};
				\draw (2.51,0) node[circle,draw=black, fill=white,inner sep=1pt,label=right:{$\left(0,\sqrt{\frac{2n}{\phi_0\left(3-2\phi_0\right)}}\right)$}]{};
				\draw (0,-3.58) node[circle,draw=black, fill=white,inner sep=1pt,label=below:{$\left(-\sqrt{\frac{3n}{2\phi_0}},-\sqrt{\frac{n}{2\phi_0\left(3-2\phi_0\right)}}\right)$}]{};
				\draw (5.02,-3.58) node[circle,draw=black, fill=white,inner sep=1pt,label=below:{$\left(\sqrt{\frac{3n}{2\phi_0}},-\sqrt{\frac{n}{2\phi_0\left(3-2\phi_0\right)}}\right)$}]{};
				\end{tikzpicture}
        \caption{View of the image of model T3 after the affine transformation into $\mathbb R^2$. The singularity is mapped to the origin $\left(0,0\right)$ and the true parameter point $\theta_0$ to $\left(0,\mu_0\right)$. The  mapping is not conformal unless $\theta_0$ is the singularity.}
\end{figure}
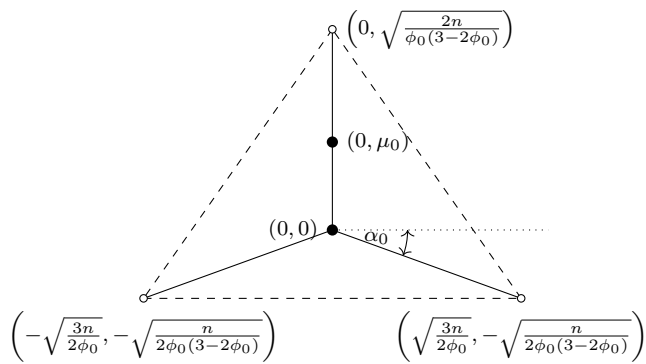\label{transformedSimplex}

Note that all the trigonometric functions in Equation~\eqref{eq:T3dis} can be expressed as algebraic functions of $\phi_0$.

To understand Equation~\eqref{eq:T3dis}, note that $Z$ and $\bar Z$ are random variables corresponding to the $x$ and $y$ components
of the sample point in the transformed space.  The first argument then is simply the squared distance of $\left(Z,\bar Z\right)$ to the vertical half-line
in the null parameter space.  The second argument is the squared distance to the other two half-lines, provided the closest point is
not the origin. $\left(Z,\bar Z\right)$ will be closest to the vertical half-line when the closest point on the other two half-lines is the origin. As shown in the proof, the distance predicted by the first argument of Equation~\eqref{eq:T3dis} is then minimal.
Thus, Equation~\eqref{eq:T3dis} is the minimum squared Euclidean distance between the sample point and the transformed null parameter space.

By replacing $\sgn\left(Z\right)$ and $\sgn\left(\bar Z\right)$ with $\pm 1$, the arguments are easily recognizable as $\chi^2$ 
distributions.  Moreover, suppose $\mu_{0}>0$ corresponds to any non-singular point in $\Theta_{0}$,
then as the sample size $n$ goes to infinity, $\mu_{0}$ also goes to infinity, causing the distribution of 
$\sgn\left(\bar{Z}\right)$ to concentrate on $1$, and the minimum in the formula tends toward selecting the first argument. 
It follows that $\widetilde \Lambda_n$ is asymptotically $\chi_{1}^{2}$-distributed as is the likelihood ratio
statistic $\Lambda_n$, though for $\Lambda_n$ the asymptotic behavior is typically determined more directly 
using the tangent cone approximation.

Now suppose $\mu_{0}=0$, so $\phi_0=1$; that is, the true parameter is the singularity. 
Then for any sample size $n$ the approximate distribution in Equation~\eqref{eq:T3dis} simplifies,
with both $Z$ and $\bar Z$ standard normal.
Although this distribution is not a $\chi^{2}$, it is exactly the asymptotic distribution, found using the 
tangent cone as in \cite{drton2009}. This is not surprising, as
the tangent cone at this point locally agrees with the model itself.
\medskip

Additional computations in Appendix~\ref{app1} give the following. 
\begin{prop} \label{prop:T3pdf}
The probability density function for the random variable $\widetilde \Lambda_n$
given for model T3 in Proposition~\ref{prop:T3approx}  is, for $\lambda > 0$,
\begin{align}
\label{eq:T3pdf}
&f_{\widetilde{\Lambda}_n}\left(\lambda\right)=
\frac{1}{2\sqrt{2\pi{}\lambda}}
\biggl [ 
\exp \left ({-\frac{\lambda}{2}}\right )\left(1-\erf\left(\frac{1}{\sqrt{2}}\left ( \sqrt \lambda\tan  \beta_0 -\mu_{0}\right)\right)\right)
\notag\\&+
\exp \left ({-\frac{1}{2}\left(\sqrt \lambda-\mu_{0}\cos\alpha_{0}\right)^{2}}\right )\left(1-\erf\left(\frac{1}{\sqrt{2}}\left(\sqrt \lambda\tan\beta_0 +\mu_{0}\sin\alpha_{0}\right)\right)\right)
\notag\\&+
\exp\left ( {-\frac{1}{2}\left(\sqrt \lambda+\mu_{0}\cos\alpha_{0}\right)^{2}}\right )\left(1-\erf\left(\frac{1}{\sqrt{2}}\left(\sqrt \lambda\tan\alpha_{0}+\mu_{0}\sin\alpha_{0}\right)\right)\right)
\biggr ],
\end{align}
where $\mu_{0}=\sqrt{2n}\frac{1-\phi_{0}}{\sqrt{\phi_{0}\left(3-2\phi_{0}\right)}}$, $\alpha_{0}=\arctan \frac 1{\sqrt{3\left(3-2\phi_0\right)}}$ and $\beta_0=\frac{1}{2}\left(\frac \pi 2 -\alpha_{0}\right)$.
\end{prop}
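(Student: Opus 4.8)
The plan is to identify $\widetilde\Lambda_n$ with the squared Euclidean distance from the Gaussian point $P=(Z,\bar Z)$ to the image of $\Theta_0$ under the affine map of Figure~\ref{transformedSimplex} --- a ``tripod'' made of the ray $L_1$ along the positive $y$-axis together with the two rays $L_2,L_3$ leaving the origin into the lower half-plane at angle $\alpha_0$ below the horizontal --- and then to obtain the density by splitting the plane into the Voronoi cells of the three rays. That $\widetilde\Lambda_n$ equals this squared distance almost surely is already the content of Proposition~\ref{prop:T3approx} and the discussion following it: for $P\neq 0$ the nearest tripod point lies in the relative interior of one of the three rays, so the plane is, up to a Lebesgue-null set, the union of three closed wedges $R_1,R_2,R_3$, with $R_i$ the set of points whose nearest tripod point lies on $L_i$. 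On $R_i$ one has $\widetilde\Lambda_n=w_i^2$, where $w_i$ is the signed component of $P$ orthogonal to $L_i$, and the component $v_i$ of $P$ along $L_i$ is $\geq 0$ there.

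First I would record the elementary geometry. With $\beta_0=\tfrac12(\tfrac\pi2-\alpha_0)$, the angle bisector of $L_1$ and $L_3$ is the ray at polar angle $\beta_0$, its mirror image bisects $L_1$ and $L_2$, and the bisector of $L_2$ and $L_3$ is the negative $y$-axis; hence $R_1$ is the wedge of polar angles $(\beta_0,\pi-\beta_0)$, $R_3$ the wedge $(-\tfrac\pi2,\beta_0)$, and $R_2$ its reflection across the $y$-axis. One checks $v_i\geq 0$ on $R_i$, so that \eqref{eq:T3dis} reduces to $Z^2$ on $R_1$ and to $w_3^2=(Z\sin\alpha_0+\bar Z\cos\alpha_0)^2$ on $R_3$ (where $\sgn(Z)=1$), and symmetrically to $w_2^2$ on $R_2$. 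Since the involution $Z\mapsto -Z$ preserves the joint law of $(Z,\bar Z)$ and the value of $\widetilde\Lambda_n$ while exchanging $R_2$ and $R_3$, one has $f_{\widetilde\Lambda_n}=f_{R_1}+2f_{R_3}$, where $f_{R_i}(\lambda)=\tfrac{d}{d\lambda}\Pr(\widetilde\Lambda_n\le\lambda,\ P\in R_i)$.

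Next I would compute $f_{R_1}$ and $f_{R_3}$ by rotating into the orthonormal frame of $L_i$, in which $(v_i,w_i)$ are independent normals of unit variance with means obtained by rotating $(0,\mu_0)$: thus $v_1=Z$, $w_1=\bar Z\sim\mathcal N(\mu_0,1)$, while $v_3\sim\mathcal N(-\mu_0\sin\alpha_0,1)$ and $w_3\sim\mathcal N(\mu_0\cos\alpha_0,1)$. The one nontrivial point is to express ``$P\in R_i$'', conditioned on the value $w_i=\pm\sqrt\lambda$ that yields $\widetilde\Lambda_n=\lambda$, as a half-line condition on the independent coordinate $v_i$: for $R_1$ this is $\bar Z>\sqrt\lambda\tan\beta_0$ irrespective of $\sgn(Z)$; for $R_3$ it is $v_3>\sqrt\lambda\cot(\beta_0+\alpha_0)$ when $w_3=+\sqrt\lambda$ and $v_3>\sqrt\lambda\tan\alpha_0$ when $w_3=-\sqrt\lambda$. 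Then $\Pr(\widetilde\Lambda_n\le\lambda,\ P\in R_i)$ is an integral of $f_{w_i}(w)\Pr(v_i\in\text{half-line}(w))$ over $|w|\le\sqrt\lambda$; differentiating in $\lambda$ picks out $w=\pm\sqrt\lambda$, producing the prefactor $\tfrac{1}{2\sqrt{2\pi\lambda}}$ and the three Gaussian exponentials of \eqref{eq:T3pdf}, while each tail probability $\Pr(\mathcal N(m,1)>c)=\tfrac12\bigl(1-\erf(\tfrac1{\sqrt2}(c-m))\bigr)$ supplies the error-function factors. The $R_1$ calculation gives Term~1, and $2f_{R_3}$ gives Terms~2 and~3 once one uses the identity $\cot(\beta_0+\alpha_0)=\tan\beta_0$, valid because $\beta_0+(\beta_0+\alpha_0)=\tfrac\pi2$, to rewrite the $w_3=+\sqrt\lambda$ branch.

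I expect the main obstacle to be precisely the geometric bookkeeping: pinning down the Voronoi wedges, verifying $v_i\ge 0$ on $R_i$ so that $\widetilde\Lambda_n$ really is $w_i^2$ rather than the squared distance to the apex, and --- most delicately --- extracting the correct one-sided inequality on $v_i$ for each of the two branches $w_i=\pm\sqrt\lambda$, since a wedge is a cone, not a half-plane, and the bounding ray that becomes active differs between the two branches. Once that is settled the remainder is routine: repeated use of the independence of $v_i$ and $w_i$ after rotation, the Gaussian tail formula above, and differentiation under the integral sign.
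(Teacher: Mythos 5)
Your proposal is correct and follows essentially the same route as the paper: the paper's three regions $R_1,R_2,R_3$ in its tube-volume computation are exactly your Voronoi wedges (split along the bisector at angle $\beta_0$), and its reflection of $L_2,L_3$ onto vertical half-lines is your rotation into the frame of each ray, so the three Gaussian-times-$\erf$ terms arise identically. The only difference is bookkeeping --- you differentiate $\Pr(\widetilde\Lambda_n\le\lambda,\ P\in R_i)$ directly using independence of the rotated coordinates, while the paper differentiates the cdf of the unsquared distance and changes variables at the end --- and your checks (that $v_i\ge 0$ on $R_i$ and that $\cot(\beta_0+\alpha_0)=\tan\beta_0$) are exactly the facts the paper's argument relies on.
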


\begin{figure}[!htb]
\hspace*{\fill}
\includegraphics[width=.8\linewidth]{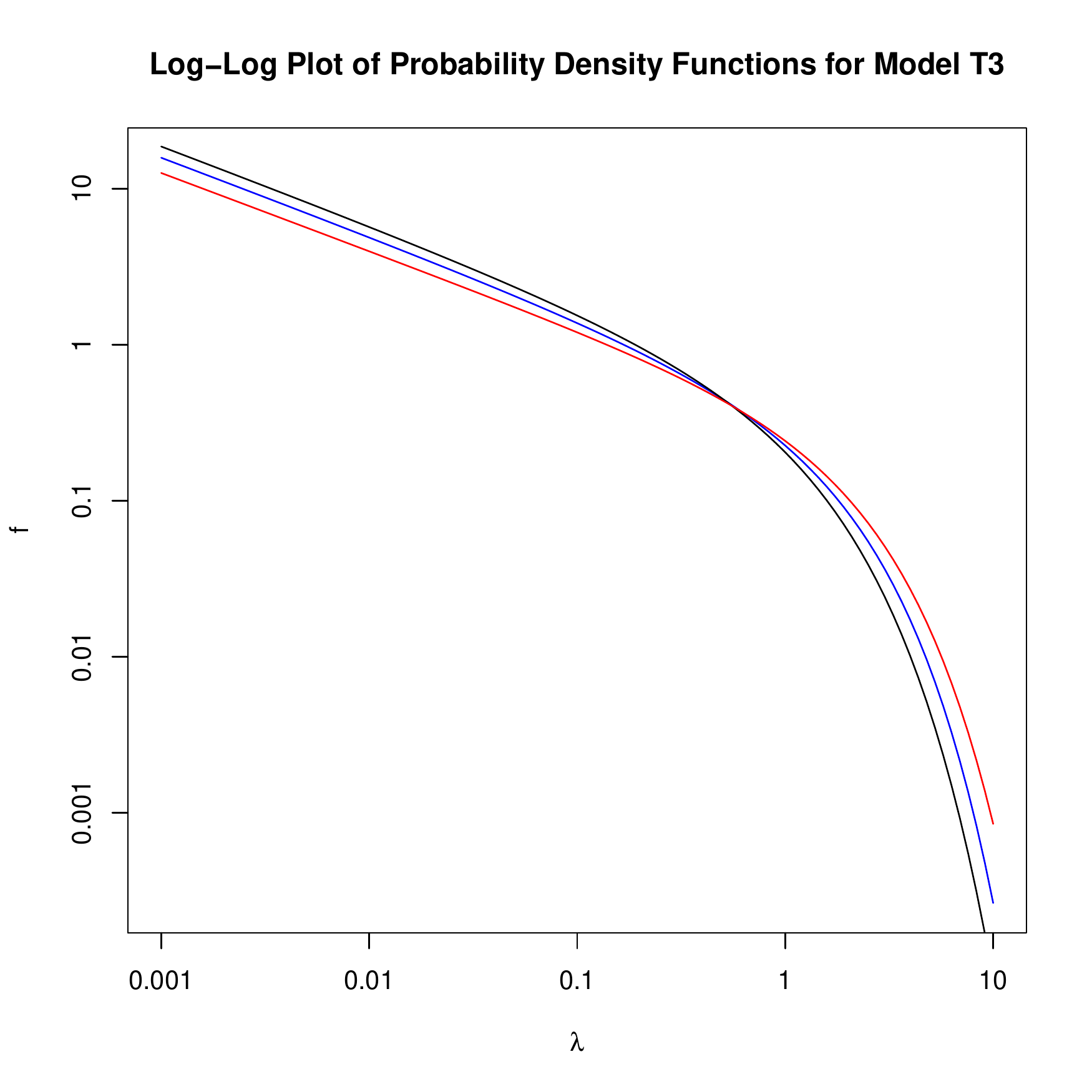}
\hspace*{\fill}
\caption{Log-log plot of three approximating density functions over part of their support $\lambda\in\left(0,\infty\right)$. 
The asymptotic density of Equation~\eqref{eq:T3pdf} at the singularity $\mu_0=0$ ($\phi_0=1$) is in black; the approximating
density at the nearby parameter value $\mu_0=1$ ($\phi_0\approx0.9993$ and $n=10^6$) is in blue; and the asymptotic density 
at non-singular points, the $\chi_1^2$ distribution, is in red.  The blue approximating density can be viewed as an 
interpolant of the two asymptotic densities
at and near the singularity.}
\label{loglogplotT3}
\end{figure}

One can show that for $\phi_0\in\left (0,1\right )$ as $n\to \infty$ Equation~\eqref{eq:T3pdf} gives the probability density function of $\chi_1^2$.

Although Proposition~\ref{prop:T3pdf} expresses the probability density function in terms of the error function, 
this density can quickly be integrated numerically to obtain a highly accurate approximation.

\medskip

Figure~\ref{loglogplotT3} compares the density functions of Equation~\eqref{eq:T3pdf} 
at the singularity $\mu_0=0$ ($\phi_0=1$) and a regular point near the singularity
$\mu_0=1$ ($\phi_0\approx{}0.9993$ when $n=10^6$) to that of $\chi_1^2$.
At the singularity, the asymptotic density is given exactly by Equation~\eqref{eq:T3pdf}, since there is no dependence on $n$. 
At all other points $\mu_0 > 0$, the asymptotic density is given by $\chi_1^2$.
The density plot for the parameter near the singularity, at $\mu_0=1$, lies between the other two plots,
and can be considered a sort of interpolant that depends both on the sample size $n$ and value of the parameter $\phi_0$. 
Unlike the asymptotic densities, which 
have a jump discontinuity 
at the singularity, the density of
Equation~\eqref{eq:T3pdf} is a continuous function of $\phi_0 \in \left[0,1\right)$
for any fixed $n$.

\subsection*{Simulations}

\begin{figure}[!htb]
\hspace*{\fill}
\begin{subfigure}{.44\textwidth}
  \includegraphics[width=1\linewidth]{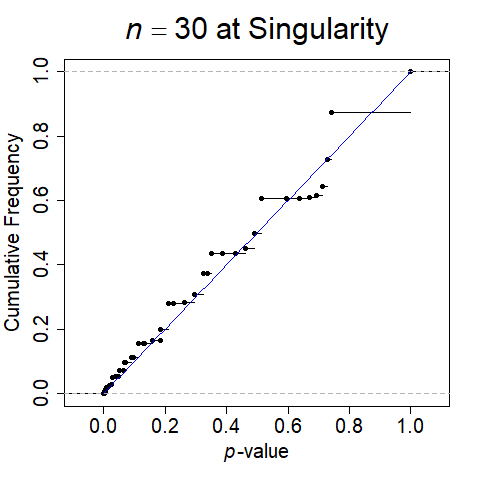}
\end{subfigure}
\hfill
\begin{subfigure}{.44\textwidth}
  \includegraphics[width=1\linewidth]{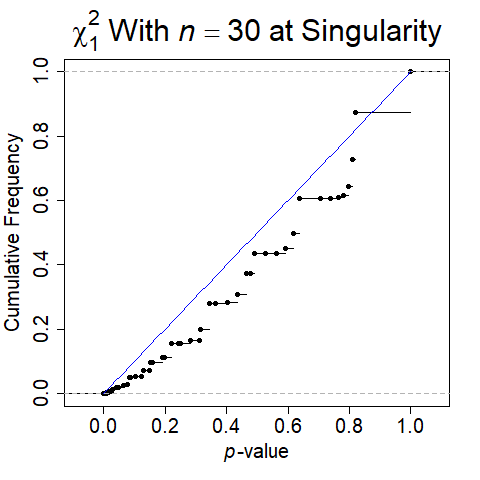}
\end{subfigure}
\hspace*{\fill} \\
\vspace*{\fill}
\hspace*{\fill}
\begin{subfigure}{.44\textwidth}
  \includegraphics[width=1\linewidth]{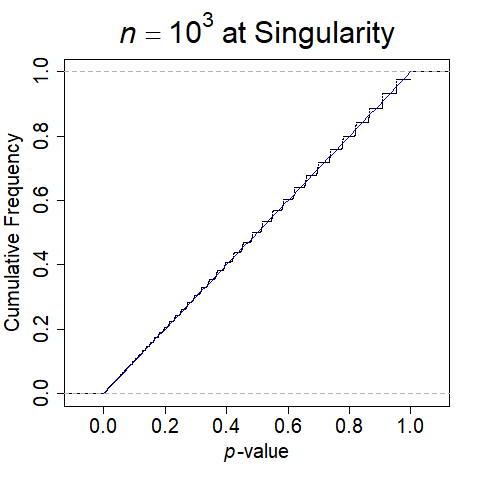}
\end{subfigure}
\hfill
\begin{subfigure}{.44\textwidth}
  \includegraphics[width=1\linewidth]{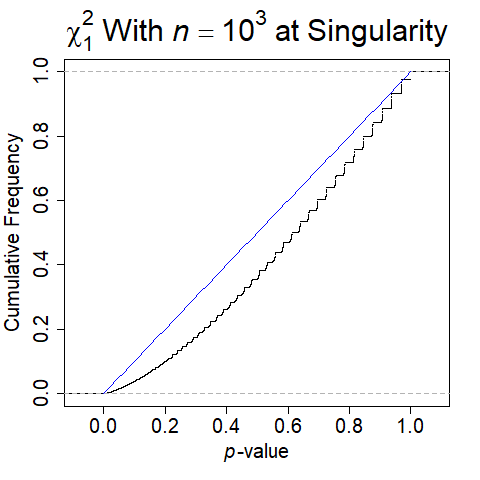}
\end{subfigure}
\hspace*{\fill} \\
\vspace*{\fill}
\hspace*{\fill}
\begin{subfigure}{.44\textwidth}
  \includegraphics[width=1\linewidth]{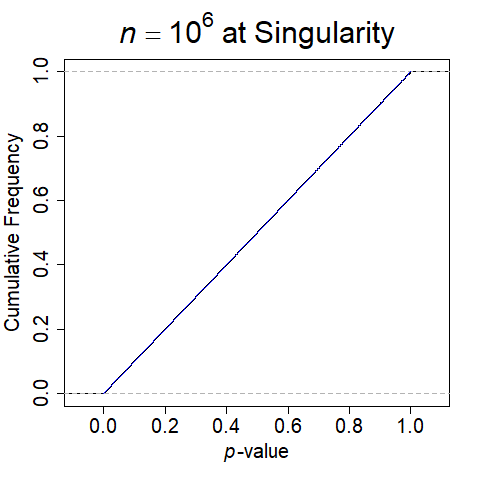}
\end{subfigure}
\hfill
\begin{subfigure}{.44\textwidth}
  \includegraphics[width=1\linewidth]{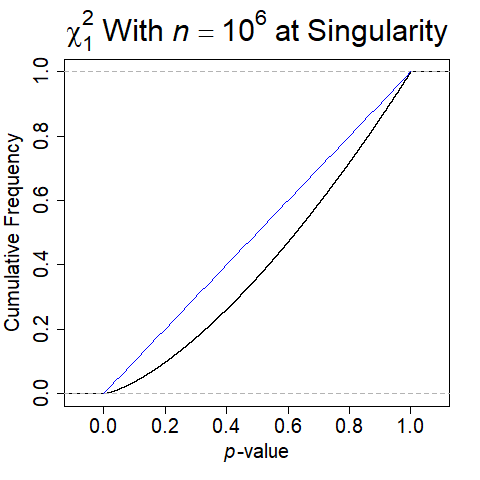}
\end{subfigure}
\hspace*{\fill} \\
\caption{Empirical cumulative distribution functions of $p$-values for the density function of  Equation~\eqref{eq:T3pdf} (left column) and the $\chi_{1}^2$ approximation (right column) for samples sizes $n=30, 10^3, 10^6$ computed at the singularity for model T3. The diagonal line, representing ideal behavior, is shown for comparison.}\label{interpdistsims3}
\end{figure}

We performed simulations to compare the use of the probability density function of Equation~\eqref{eq:T3pdf} to 
the $\chi_{1}^{2}$ density for determining $p$-values of the likelihood ratio statistic when testing $H_0$ vs. $H_1$.
We focused on true parameter values both at ($\mu_0 = 0)$ and near the singularity ($\mu_0 = 1$, $n$ varying). 
Near the singularity both distributions agree asymptotically, but at the singularity the $\chi_1^2$ distribution is not the 
asymptotic distribution, while that of Equation~\eqref{eq:T3pdf} is.  
As the $\chi_1^2$ distribution might naively be applied by an empiricist at the singularity, this last comparison is relevant.
The value $\mu_0=1$ was chosen to be near enough, but not too near, to the singularity 
so that the $\chi_{1}^{2}$ distribution and the asymptotic distribution at the singularity were both poor approximations.
A range of sample sizes was chosen, in part to demonstrate that near the singularity the $\chi_{1}^{2}$ distribution 
can perform relatively poorly even for a large sample size, despite its being the asymptotic distribution. 

Specifically, for the simulations presented in Figures \ref{interpdistsims3}, \ref{interpdistsims4}, (and later in Figures
\ref{fig:sim1tree} and \ref{fig:sim1treenb}), $\theta_0 \in \Theta_0$ was chosen making $\mu_0=0$ or $1$ 
for sample sizes $n = 30$, $10^3$, $10^6$.  For each setting, $\mu_0$, $n$, data was simulated from the 
multinomial distribution $10^{6}$ times, and likelihood ratio statistics were calculated for each replicate. 
The probability density functions of Proposition~\ref{prop:T3pdf} were used to determine $p$-values 
by numerical integration from the observed value of the statistic to infinity; $p$-values 
were also calculated using the $\chi_{1}^{2}$ approximation by standard software.  For each setting an empirical cumulative distribution function for $10^6$ $p$-values was graphed.  
 
In Figures~\ref{interpdistsims3}~and~\ref{interpdistsims4}, the discrete nature of the multinomial distribution is strongly apparent,
particularly for $n=30$.  Since the possible likelihood ratio statistics form a discrete set and are unevenly spaced, jumps in the 
cumulative plots of $p$-values are unavoidable regardless of the simulation size.

Ideally, when $\Theta_0$ has lower dimension than $\widetilde{\Theta}$ (unlike Example~\ref{ex:coin}) as for 
model T3, an approximate density function for the likelihood ratio statistic produces a simulated 
empirical cumulative distribution function of $p$-values close to $F_X\left(x\right)=x$ for $x\in\left(0,1\right)$.  
The left column of Figure~\ref{interpdistsims3} shows that this holds for the density function of 
Equation~\eqref{eq:T3pdf} for the singularity, even for a relatively small sample size of $n=30$. 
In contrast, this fails for the $\chi_1^2$ distribution (which is not the asymptotic distribution),
as seen in the right column.

\begin{figure}[!htb]
\hspace*{\fill}
\begin{subfigure}{.44\textwidth}
  \includegraphics[width=1\linewidth]{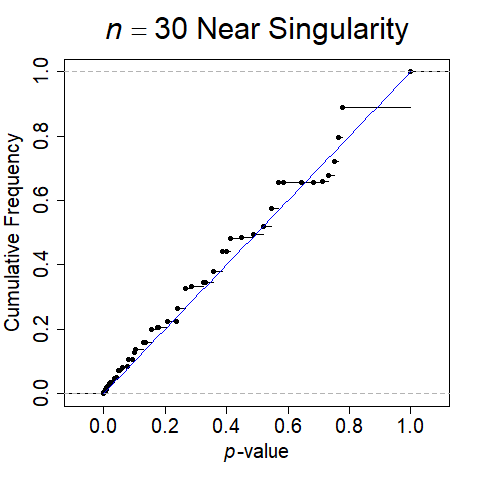}
\end{subfigure}
\hfill
\begin{subfigure}{.44\textwidth}
  \includegraphics[width=1\linewidth]{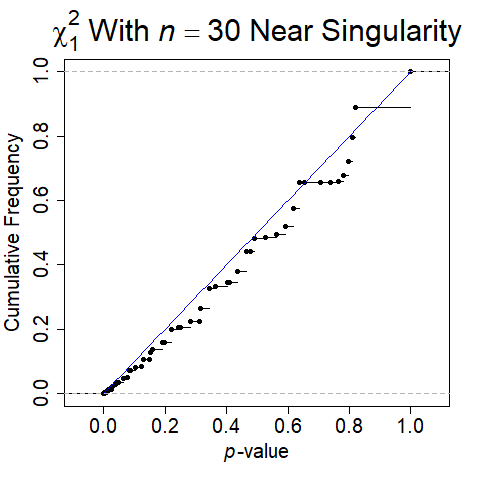}
\end{subfigure}
\hspace*{\fill} \\
\vspace*{\fill}
\hspace*{\fill}
\begin{subfigure}{.44\textwidth}
  \includegraphics[width=1\linewidth]{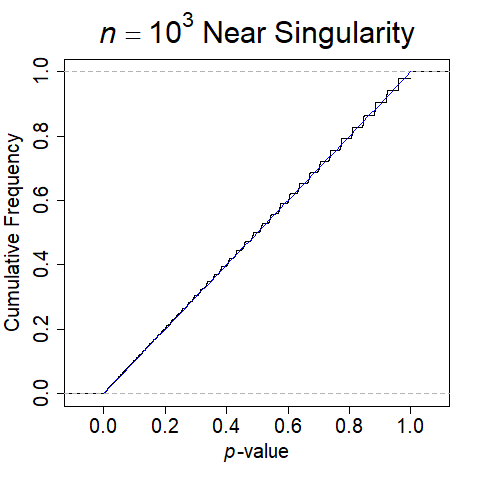}
\end{subfigure}
\hfill
\begin{subfigure}{.44\textwidth}
  \includegraphics[width=1\linewidth]{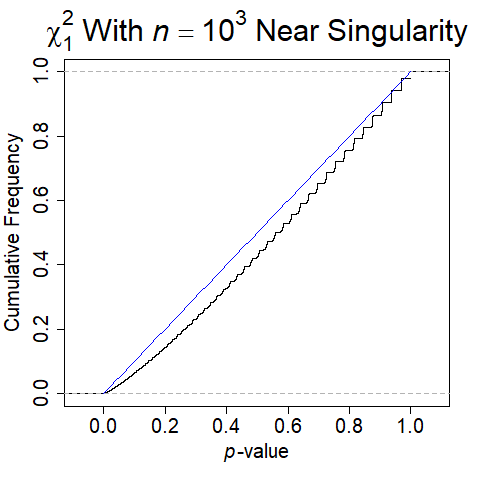}
\end{subfigure}
\hspace*{\fill} \\
\vspace*{\fill}
\hspace*{\fill}
\begin{subfigure}{.44\textwidth}
  \includegraphics[width=1\linewidth]{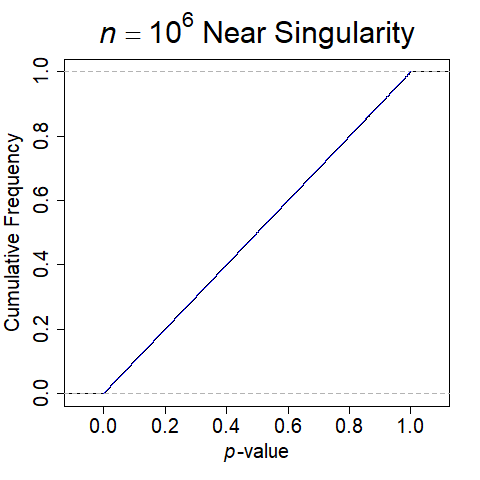}
\end{subfigure}
\hfill
\begin{subfigure}{.44\textwidth}
  \includegraphics[width=1\linewidth]{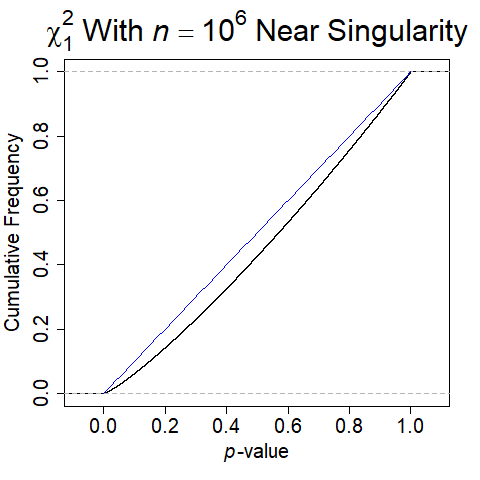}
\end{subfigure}
\hspace*{\fill} \\
\caption{Empirical cumulative distribution functions of $p$-values for the density function of Equation~\eqref{eq:T3pdf} 
(left column) and the $\chi_{1}^2$ approximation (right column) for sample sizes $n=30, 10^3, 10^6$ computed near the singularity, 
$\mu_0=1$, for model T3. The diagonal line, representing ideal behavior, is shown for comparison.}
\label{interpdistsims4}
\end{figure}

In Figure~\ref{interpdistsims4}, the results of these simulations are shown for the parameter near the singularity.
Again, plots in the left column show that the density function of Equation~\eqref{eq:T3pdf} performs extremely well, 
even for a sample size of $n=30$. The right column illustrates that the $\chi_{1}^{2}$ distribution is a poor approximation
for each of the three sample sizes, even though it is the asymptotic distribution. 
As an approximate density, the $\chi_1^2$ performs better here than at the singularity where it is not the asymptotic distribution,
but not as well as the approximating density of $\widetilde \Lambda_n$.
In summary, naively assuming the $\chi_1^2$ distribution is an accurate approximation for the
likelihood ratio statistic near (or at) a singularity can lead to inaccurate estimates of $p$-values.

Significantly, the right columns of Figures~\ref{interpdistsims3}~and~\ref{interpdistsims4} suggest that the use of the 
$\chi_{1}^{2}$ distribution gives a conservative test, as it produces larger $p$-values than desired, 
leading to rejecting $H_{0}$ less often than desired. Moreover, such a test is increasingly conservative 
closer to the singularity. This behavior has an intuitive geometric interpretation: When $\theta_0$ is on the vertical
line segment of $\Theta_0$ and near, but not at, the singularity, then 
the observation can be substantially closer to an incorrect segment of $\Theta_0$ than to the correct segment. The observation is then interpreted to be less extreme than it should be. Use of the 
$\chi_{1}^{2}$ distribution then gives a larger $p$-value than desired.

\section{Application to Model T1}\label{sec:T1}

We now examine our second example, model T1, in which the null hypothesis is that the species tree has a specific topology.

Our two hypotheses for this test are:
\begin{align*}
H_{0}\tc \quad{}&\Theta_{0}=\left\{\left(1-\frac{2}{3}\phi_{0},\frac{1}{3}\phi_{0},\frac{1}{3}\phi_{0}\right)\right\}, \text{ with $\phi_{0}=e^{-t} \in \left(0,1\right]$,} \\
H_{1}\tc \quad{}&\Theta_{1}=\Delta^2\smallsetminus\Theta_0.
\end{align*}

The model $\widetilde{\Theta}=\Theta_0\cup\Theta_1$ is again the open probability simplex $\Delta^2$, which is viewed as a subset of $\mathbb{R}^2$ through the same affine transformation used for model T3. This is as depicted in Figure~\ref{transformedSimplex}, but with the two non-vertical line segments erased.

Applying Theorem~\ref{maintheorem}, an
approximate distribution of the likelihood ratio statistic can be found. The proof of the following is given in Appendix~\ref{app2}.

\begin{prop} \label{prop:T1approx} 
For model T1, the likelihood ratio statistic for testing $H_0$ vs. $H_1$ at a true parameter point $\theta_0= \left(1-\frac{2}{3}\phi_{0},\frac{1}{3}\phi_{0},\frac{1}{3}\phi_{0}\right)$ with sample size $n$ is approximately distributed as the random variable
\begin{equation*}
\widetilde{\Lambda}_n=Z^{2}+\frac{1}{2}\left(1-\sgn\left(\bar{Z}\right)\right)\bar{Z}^{2},\end{equation*}
where $Z\sim{}\mathcal{N}\left(0,1\right)$, $\bar{Z}\sim{}\mathcal{N}\left(\mu_{0},1\right)$ and $\mu_{0}=\sqrt{2n}\frac{1-\phi_{0}}{\sqrt{\phi_{0}\left(3-2\phi_{0}\right)}}$. 
\end{prop}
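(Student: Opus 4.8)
The plan is to apply Theorem~\ref{maintheorem} directly, taking advantage of the fact that for model T1 the null set $\Theta_0$ is a single one-parameter curve while $\widetilde\Theta=\Delta^2$ is full-dimensional. By Theorem~\ref{maintheorem}, $\Lambda_n$ is approximately distributed as
\[
W=\inf_{\tau\in\sqrt n\,\mathcal I(\theta_0)^{1/2}\Theta_0}\|Y-\tau\|^2-\inf_{\tau\in\sqrt n\,\mathcal I(\theta_0)^{1/2}\widetilde\Theta}\|Y-\tau\|^2,\qquad Y\sim\mathcal N\!\left(\sqrt n\,\mathcal I(\theta_0)^{1/2}\theta_0,\,I\right).
\]
First I would dispose of the second infimum. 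Since $\widetilde\Theta=\Delta^2$ is open in $\mathbb R^2$ and $\theta_0$ lies in its interior, the scaled set $\sqrt n\,\mathcal I(\theta_0)^{1/2}\widetilde\Theta$ contains a ball about $\sqrt n\,\mathcal I(\theta_0)^{1/2}\theta_0$ whose radius grows like $\sqrt n$; under the standing simplification that the mass of $Y$ outside the image of $\widetilde\Theta$ is negligible, $Y$ lies in that image with probability essentially $1$, so the second infimum is $0$ and $W$ reduces to the squared distance from $Y$ to the transformed copy of $\Theta_0$.

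Next I would pass to the coordinates of Figure~\ref{transformedSimplex}. The affine map used there --- the linear map $\tau\mapsto\sqrt n\,\mathcal I(\theta_0)^{1/2}\tau$ composed with a Euclidean isometry, which leaves every squared distance unchanged --- carries $\Delta^2$ into $\mathbb R^2$ so that the singularity $\left(\tfrac13,\tfrac13,\tfrac13\right)$ goes to the origin, $\theta_0$ goes to $(0,\mu_0)$, and $Y$ becomes $(Z,\bar Z)$ with $Z\sim\mathcal N(0,1)$ and $\bar Z\sim\mathcal N(\mu_0,1)$ independent, where $\mu_0=\sqrt{2n}\,(1-\phi_0)/\sqrt{\phi_0(3-2\phi_0)}$. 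Under this map the one-parameter null curve $\phi_0\mapsto\left(1-\tfrac23\phi_0,\tfrac13\phi_0,\tfrac13\phi_0\right)$ is sent to a segment of the positive vertical axis, from the origin (the image of $\phi_0=1$) out to the image of the vertex $(1,0,0)$; invoking the assumption that $\theta_0$ is far from the bounding triangle, that endpoint is many standard deviations away, and the segment may be replaced by the half-line $\{(0,y):y\ge 0\}$ with negligible error. This is precisely the vertical half-line of Proposition~\ref{prop:T3approx}, now with the two non-vertical half-lines of model T3 absent.

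It then remains to evaluate the squared distance from $(Z,\bar Z)$ to $\{(0,y):y\ge 0\}$: when $\bar Z\ge 0$ the nearest point is $(0,\bar Z)$, giving squared distance $Z^2$; when $\bar Z<0$ the nearest point is the origin, giving $Z^2+\bar Z^2$. Writing the indicator of $\{\bar Z<0\}$ as $\tfrac12(1-\sgn(\bar Z))$ yields
\[
W\;\approx\;Z^2+\tfrac12\bigl(1-\sgn(\bar Z)\bigr)\bar Z^2,
\]
which is exactly the first argument of the minimum in Equation~\eqref{eq:T3dis} and the asserted form of $\widetilde\Lambda_n$. The only substantive work, deferred to Appendix~\ref{app2} in parallel with Appendix~\ref{app1}, is verifying that the affine transformation behaves as claimed: computing the trinomial Fisher information $\mathcal I(\theta_0)$ at $\theta_0=\left(1-\tfrac23\phi_0,\tfrac13\phi_0,\tfrac13\phi_0\right)$ under the constraint $\sum_i p_i=1$, choosing a square root, and checking both that the composite map sends $\theta_0$ to $(0,\mu_0)$ with the stated $\mu_0$ and that it standardizes the covariance of $Y$. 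This is elementary but is where care is required, and I expect it --- rather than the distance computation --- to be the main obstacle.
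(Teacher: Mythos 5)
Your proposal is correct and follows essentially the same route as the paper: the paper's Appendix~\ref{app2} proves Proposition~\ref{prop:T1approx} by exactly this reduction --- the affine transformation of Appendix~\ref{app1} sends $\Theta_0$ to (a segment of) the vertical half-line, and the squared distance from $(Z,\bar Z)$ to that half-line is the expression in Equation~\eqref{eq:firstArg}. Your explicit handling of the vanishing second infimum and the replacement of the segment by the half-line matches the paper's standing simplifications, so there is nothing further to add.
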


Note that the distribution is the same as the first argument of the minimum in the distribution in Proposition~\ref{prop:T3approx} for model T3. 
This is expected as the first argument referred to the single line segment which is $\Theta_0$ in this example.

Again, if $\sgn\left(\bar{Z}\right)$ was always positive then the distribution would be a $\chi_{1}^{2}$ distribution, 
while if $\sgn\left(\bar{Z}\right)$ was always negative then it would be a $\chi_{2}^{2}$ distribution. 
Further calculations in  Appendix~\ref{app2} yield the following.

\begin{prop}\label{prop:T1pdf}
The probability density function of the random variable $\widetilde \Lambda_n$ given
for model T1 in Proposition~\ref{prop:T1approx} is,
for $\lambda > 0$,
\begin{equation}
f_{\widetilde{\Lambda}_n}\left(\lambda\right)=\frac{1}{4} \exp\left ({-\frac{\lambda}{2}}\right )\left[
\sqrt{\frac{2}{\pi \lambda}}\left(1+\erf\left(\frac{\mu_{0}}{\sqrt{2}}\right)\right) 
-\exp \left ( {-\frac{\mu_{0}^{2}}{2}}\right )M_{0}\left(\mu_{0}{}\sqrt{\lambda}\right)\right],\label{eq:T1pdf}
\end{equation}
where $M_{0}\left(x\right)=-\frac{2}{\pi}\int_{0}^{\frac{\pi}{2}}\exp\left(-x\cos\theta\right)d\theta$ is the modified Struve function
$11.5.5$ from \citet{olver2010nist} for real numbers $x$ and $\mu_{0}=\sqrt{2n}\frac{1-\phi_{0}}{\sqrt{\phi_{0}\left(3-2\phi_{0}\right)}}$.
\end{prop}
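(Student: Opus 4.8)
The plan is to compute the law of $\widetilde\Lambda_n$ directly from its representation in Proposition~\ref{prop:T1approx} as a function of the two independent coordinates of the normal vector $Y$ of Theorem~\ref{maintheorem}: here $Z\sim\mathcal N(0,1)$ and $\bar Z\sim\mathcal N(\mu_0,1)$ are independent. Since $\sgn(\bar Z)=1$ on $\{\bar Z>0\}$ and $\sgn(\bar Z)=-1$ on $\{\bar Z<0\}$, while $\{\bar Z=0\}$ is null, the statistic equals $Z^2$ on the first event and $Z^2+\bar Z^2$ on the second. Writing $\Phi$ for the standard normal distribution function, the cumulative distribution function then splits as
\[
P(\widetilde\Lambda_n\le\lambda)=P(Z^2\le\lambda)\,P(\bar Z>0)+P\bigl(Z^2+\bar Z^2\le\lambda,\ \bar Z<0\bigr),
\]
where independence of $Z$ and $\bar Z$ is used to factor the first term, and $P(\bar Z>0)=\Phi(\mu_0)=\tfrac12\bigl(1+\erf(\mu_0/\sqrt2)\bigr)$.

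First I would differentiate the first summand: $\tfrac{d}{d\lambda}P(Z^2\le\lambda)$ is the $\chi^2_1$ density $\tfrac{1}{\sqrt{2\pi\lambda}}e^{-\lambda/2}$, and multiplying by $\tfrac12\bigl(1+\erf(\mu_0/\sqrt2)\bigr)$ produces exactly the first term $\tfrac14 e^{-\lambda/2}\sqrt{2/(\pi\lambda)}\,\bigl(1+\erf(\mu_0/\sqrt2)\bigr)$ of \eqref{eq:T1pdf}.

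For the second summand I would write the joint density $\tfrac1{2\pi}\exp\bigl(-z^2/2-(\bar z-\mu_0)^2/2\bigr)$, complete the square to extract the factor $e^{-\mu_0^2/2}e^{\mu_0\bar z}$, and pass to polar coordinates $z=r\cos\theta$, $\bar z=r\sin\theta$, under which the region $\{z^2+\bar z^2\le\lambda,\ \bar z<0\}$ becomes $\{0\le r\le\sqrt\lambda,\ -\pi<\theta<0\}$, so that
\[
P\bigl(Z^2+\bar Z^2\le\lambda,\ \bar Z<0\bigr)=\frac{e^{-\mu_0^2/2}}{2\pi}\int_{-\pi}^{0}\!\int_0^{\sqrt\lambda} r\,e^{-r^2/2}e^{\mu_0 r\sin\theta}\,dr\,d\theta .
\]
Differentiating in $\lambda$ — evaluating the $r$-integrand at $r=\sqrt\lambda$ and multiplying by $dr/d\lambda=1/(2\sqrt\lambda)$ — collapses this to $\tfrac{1}{4\pi}e^{-\mu_0^2/2}e^{-\lambda/2}\int_{-\pi}^{0}e^{\mu_0\sqrt\lambda\,\sin\theta}\,d\theta$. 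Substituting $\theta\mapsto-\theta$ and then folding $[0,\pi]$ about $\pi/2$ rewrites the angular integral as $2\int_0^{\pi/2}e^{-\mu_0\sqrt\lambda\,\cos\theta}\,d\theta=-\pi\,M_0(\mu_0\sqrt\lambda)$ by the integral representation $11.5.5$ of \citet{olver2010nist}; this gives precisely the second term of \eqref{eq:T1pdf}. Adding the two contributions yields the stated density.

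The main obstacle — indeed the only step that is not bookkeeping — is recognizing the leftover angular integral $\int_0^\pi e^{-x\sin\theta}\,d\theta=-\pi\,M_0(x)$ through the integral representation of the modified Struve function $M_0$; the polar change of variables, differentiation under the integral sign, and the sign accounting are mechanical. It is also worth stating explicitly that $Z$ and $\bar Z$ are independent, since that is what licenses both factoring the first term of the cumulative distribution function and writing the joint density as a product.
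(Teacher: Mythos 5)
Your proof is correct and takes essentially the same route as the paper's: your split on the sign of $\bar Z$ is exactly the paper's decomposition of the tubular neighborhood of the half-line into a vertical strip ($\bar Z>0$) and a half-disk ($\bar Z<0$), and both arguments turn on the same key identification $\int_0^{\pi}e^{-x\sin\theta}\,d\theta=2\int_0^{\pi/2}e^{-x\cos\theta}\,d\theta=-\pi M_0(x)$ from representation $11.5.5$. The only cosmetic differences are that the paper differentiates the cdf of the unsquared distance in $x$ and substitutes $x=\sqrt{\lambda}$ afterward, whereas you differentiate directly in $\lambda$, and your dispatching of the $\{\bar Z>0\}$ term as $\Phi(\mu_0)$ times the $\chi_1^2$ density via independence is a slightly cleaner account of the paper's integral over the strip.
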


At the singularity, where $\mu_{0}=0$, Equation~\eqref{eq:T1pdf} gives
the probability density function of $\frac 12 \chi_{1}^{2}+\frac 12\chi_{2}^{2}$. This is as one expects from Example~1.2 of \citet{drton2009}. One can also show that for $\phi_0\in\left (0,1\right )$ as $n\to \infty$ Equation~\eqref{eq:T1pdf} gives the probability density function of $\chi_1^2$,
since $M_0\left(x\right)\to{}0$ as $x\to\infty$. 

Again the approximate probability density function can be integrated numerically quickly to obtain a 
highly accurate numerical approximation.

\medskip

Figure~\ref{loglogplotT1} gives a graphical comparison of the probability density functions of 
Equation~\eqref{eq:T1pdf} at $\mu_0=1$ ($\phi_0\approx{}0.9993$ and $n=10^6$) and at $\mu_0=0$ 
(the probability density function $\frac 12 \chi_{1}^{2}+\frac 12\chi_{2}^{2}$ at the boundary) to that of $\chi_1^2$. 
The black and red densities are the asymptotic densities at and near the boundary, respectively. 
The graph for a parameter near the boundary
($\mu_0=1$)
lies between those for the asymptotic 
distributions, interpolating them in a way dependent on both sample size $n$ and parameter 
$\phi_0$. Unlike the asymptotic distributions, which jump discontinuously at the singularity, the density of
Equation~\eqref{eq:T1pdf} is a continuous function of $\phi_0$.

Note that the $\chi_1^2$ density (red curve) is closer to the approximate density (blue curve) in 
Figure~\ref{loglogplotT1} than in Figure~\ref{loglogplotT3}, indicating it is closer to our distribution for T1 than for T3. 
This is not surprising, since the derivation of the asymptotic $\chi_1^2$ is based on replacing the model with a single vertical line, which more closely matches the geometry of  the model T1 than T3. 

\begin{figure}[!htb]
\hspace*{\fill}
\includegraphics[width=.8\linewidth]{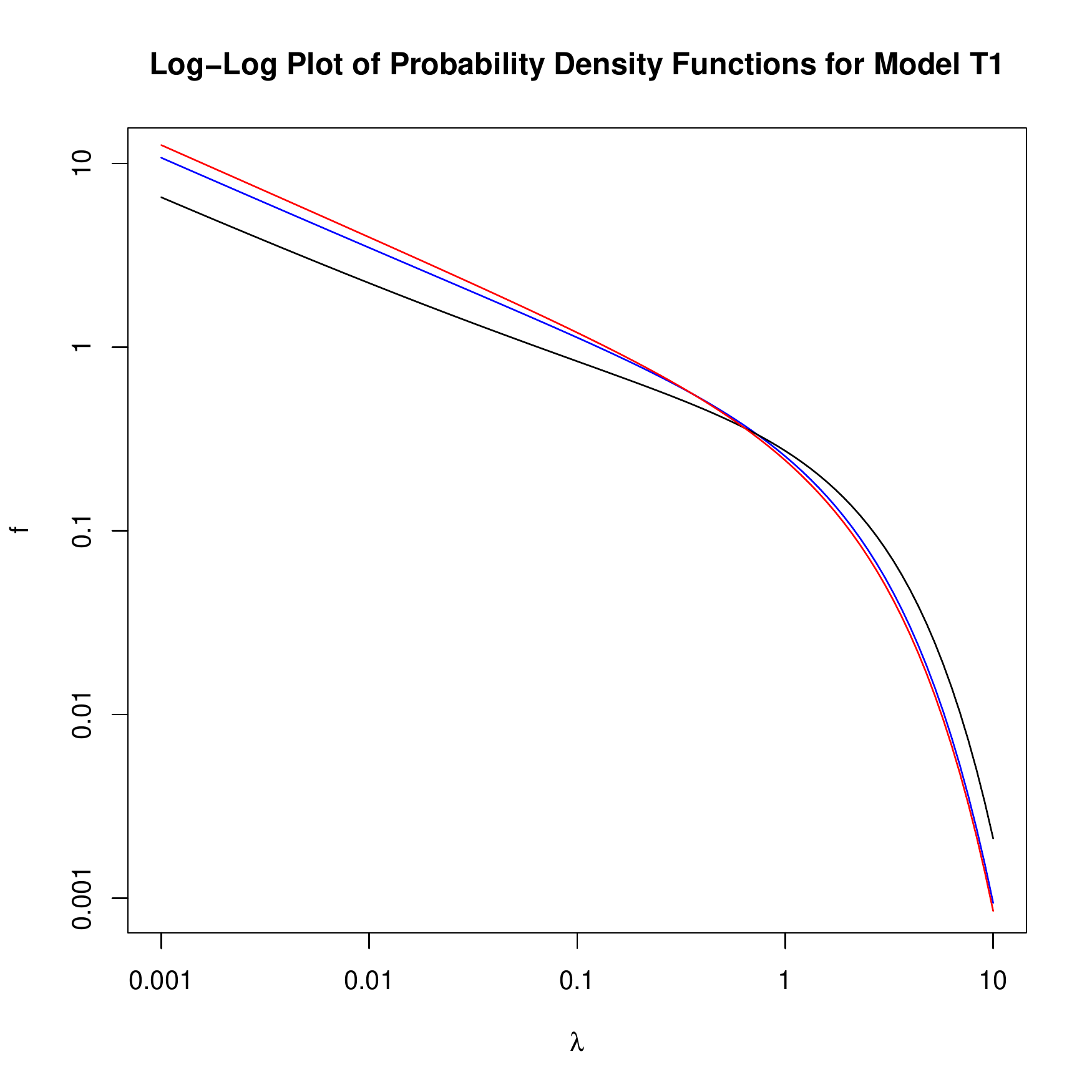}
\hspace*{\fill}
\caption{Log-log plot of three probability density functions over part of their support, $\lambda\in\left(0,\infty\right)$. The density of Equation~\eqref{eq:T1pdf} at $\mu_0=1$ ($\phi_0\approx{}0.9993$ and $n=10^6$) is in blue; the density of $\frac 12 \chi_{1}^{2}+\frac 12\chi_{2}^{2}$ of the boundary is in black; and the density of the $\chi_1^2$ distribution is in red.
The black and red plots are the asymptotic distributions at and near the boundary, respectively.}
\label{loglogplotT1}
\end{figure}

\subsection*{Simulations}

The performance of the approximate density function of Proposition~\ref{prop:T1pdf} was compared to the density function 
of the $\chi_{1}^{2}$ distribution through simulations for model T1, similar to those previously described for T3.

\begin{figure}[!htb]
\hspace*{\fill}
\begin{subfigure}{.44\textwidth}
  \includegraphics[width=1\linewidth]{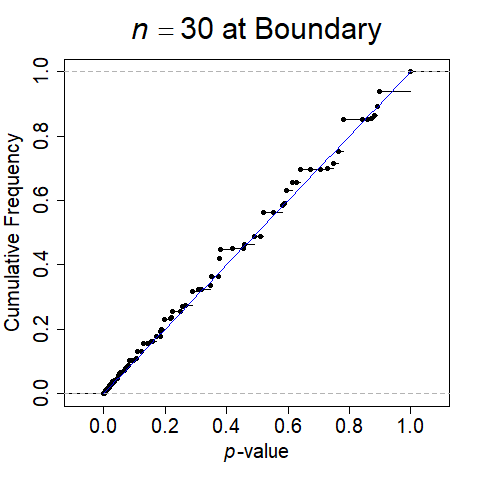}
\end{subfigure}
\hfill
\begin{subfigure}{.44\textwidth}
  \includegraphics[width=1\linewidth]{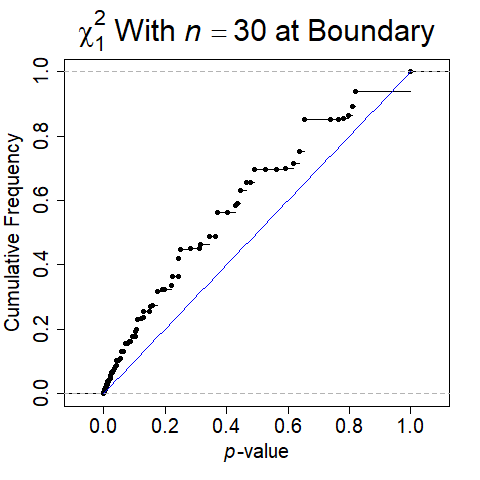}
\end{subfigure}
\hspace*{\fill} \\
\vspace*{\fill}
\hspace*{\fill}
\begin{subfigure}{.44\textwidth}
  \includegraphics[width=1\linewidth]{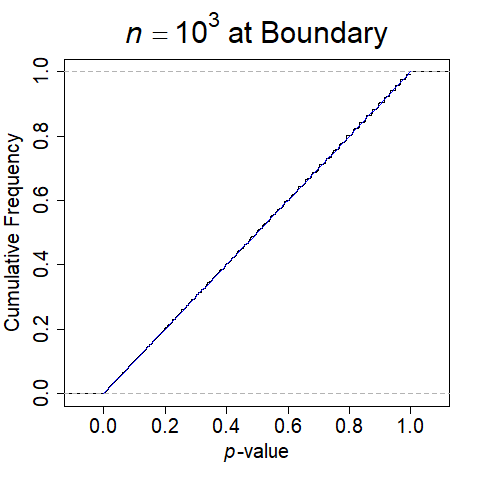}
\end{subfigure}
\hfill
\begin{subfigure}{.44\textwidth}
  \includegraphics[width=1\linewidth]{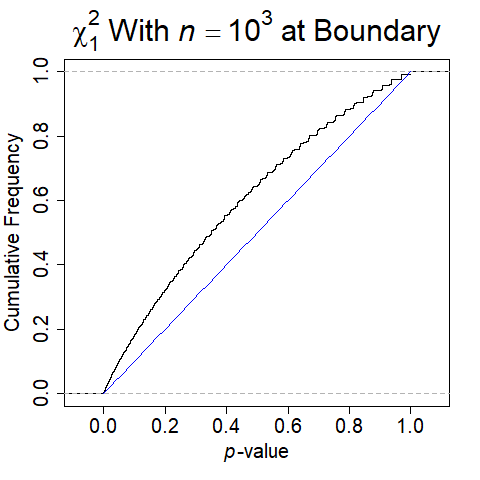}
\end{subfigure}
\hspace*{\fill} \\
\vspace*{\fill}
\hspace*{\fill}
\begin{subfigure}{.44\textwidth}
  \includegraphics[width=1\linewidth]{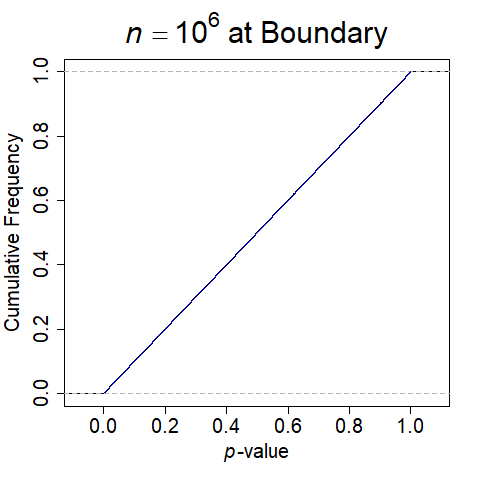}
\end{subfigure}
\hfill
\begin{subfigure}{.44\textwidth}
  \includegraphics[width=1\linewidth]{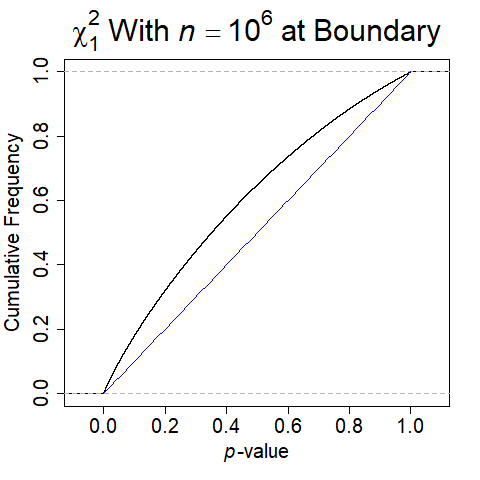}
\end{subfigure}
\hspace*{\fill} \\
\caption{Empirical cumulative distribution functions of $p$-values for the density function of 
Equation~\eqref{eq:T1pdf} (left column) and the $\chi_{1}^2$ approximation (right column)
for sample sizes $n=30, 10^3, 10^6$ computed at the boundary
for model T1. The diagonal line, representing ideal behavior, is shown for comparison.}
\label{fig:sim1tree}
\end{figure}

\begin{figure}[!htb]
\hspace*{\fill}
\begin{subfigure}{.44\textwidth}
  \includegraphics[width=1\linewidth]{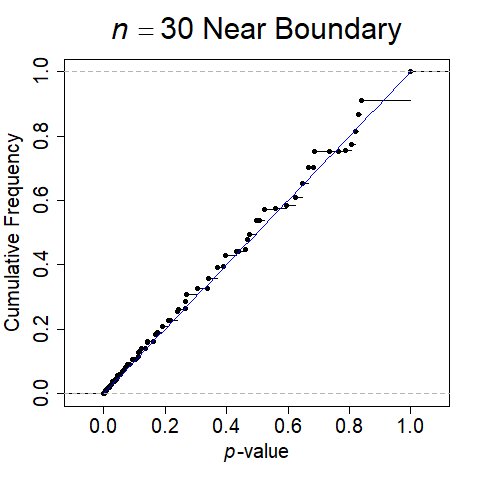}
\end{subfigure}
\hfill
\begin{subfigure}{.44\textwidth}
  \includegraphics[width=1\linewidth]{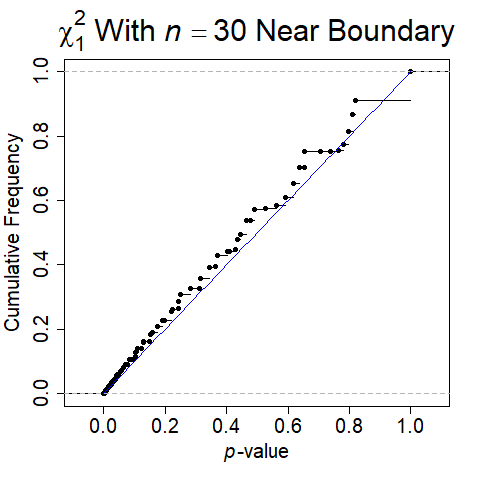}
\end{subfigure}
\hspace*{\fill} \\
\vspace*{\fill}
\hspace*{\fill}
\begin{subfigure}{.44\textwidth}
  \includegraphics[width=1\linewidth]{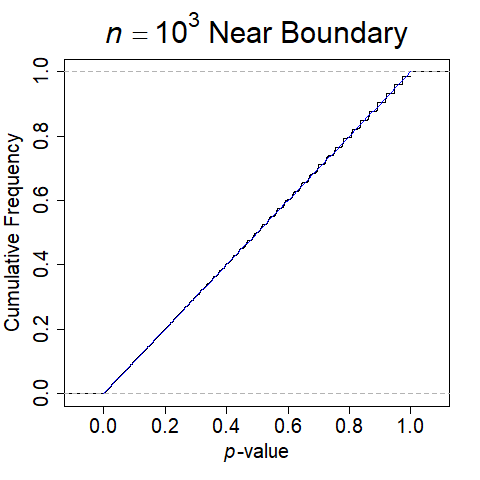}
\end{subfigure}
\hfill
\begin{subfigure}{.44\textwidth}
  \includegraphics[width=1\linewidth]{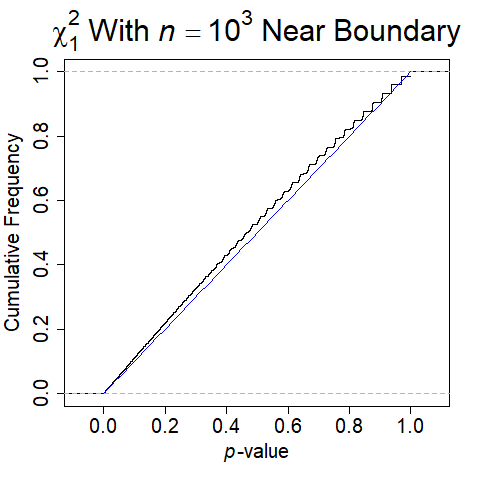}
\end{subfigure}
\hspace*{\fill} \\
\vspace*{\fill}
\hspace*{\fill}
\begin{subfigure}{.44\textwidth}
  \includegraphics[width=1\linewidth]{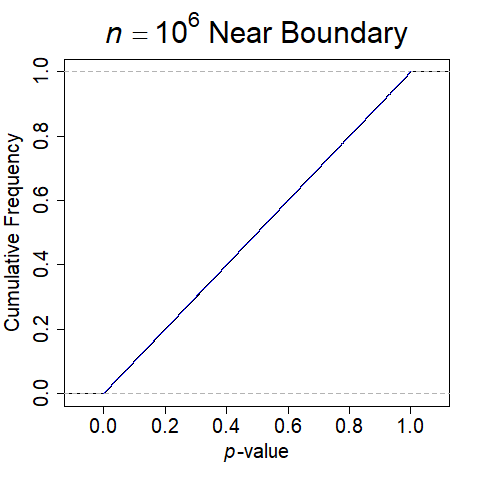}
\end{subfigure}
\hfill
\begin{subfigure}{.44\textwidth}
  \includegraphics[width=1\linewidth]{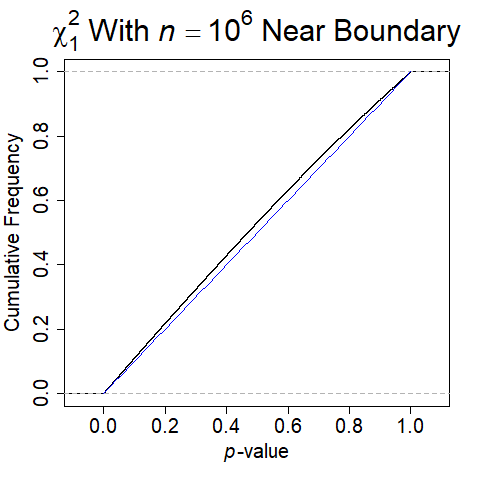}
\end{subfigure}
\hspace*{\fill} \\
\caption{Empirical cumulative distribution functions of $p$-values for the density function of 
Equation~\eqref{eq:T1pdf} and the $\chi_{1}^2$ approximation (right column) 
for samples sizes $n=30, 10^3, 10^6$ computed near the boundary,
$\mu_0=1$, for model T1. 
The diagonal line, representing ideal behavior, is shown for comparison.}
\label{fig:sim1treenb}
\end{figure}

In Figure~\ref{fig:sim1tree}, it can be seen that at the boundary our approximate density function 
outperforms the $\chi_{1}^{2}$ approximation, which is biased 
towards smaller $p$-values. This is expected, since the distribution in Proposition~\ref{prop:T1approx} 
is the asymptotic distribution and $\chi_1^2$ is not. We note that the $\chi_{1}^{2}$ approximation 
rejects $H_{0}$ more often than it should and thus gives an anti-conservative test.

Near the boundary, as shown in Figure~\ref{fig:sim1treenb}, our probability density function 
again fits the distribution of the likelihood ratio statistic better than 
the $\chi_{1}^{2}$ does, though the improvement is minimal compared to that in Figure~\ref{fig:sim1tree}
for the boundary.  This is expected as the $\chi_{1}^{2}$ is now the asymptotic distribution. Moving away from the boundary 
(simulations not shown), the $\chi_{1}^{2}$ distribution becomes a progressively better approximation, 
but remains biased towards smaller $p$-values. Thus the use of the $\chi_{1}^{2}$ approximation 
leads to rejection of $H_{0}$ more often than it should, and is anti-conservative. Again, the 
$\chi_{1}^{2}$ performs better for model T1 than for model T3 for some $\mu_0$.

\medskip

The anti-conservative behavior of the $\chi_{1}^{2}$ distribution is geometrically intuitive. For a true parameter 
$\theta_0$ near the boundary point of $\Theta_0$, some sample points will lie lower than the boundary, 
giving an MLE that is the boundary point. Such sample points are thus further from the MLE than they 
are from the vertical line extending $\Theta_0$. However, the $\chi_1^2$ distribution is appropriate 
for judging their squared distance from that line. This causes them to be viewed as more extreme than they 
should be, and their $p$-values to be calculated as smaller than desired.

\section{Approximating likelihood ratio statistic distributions with \texorpdfstring{$\chi^2$}{chi-squared}}
\label{totalvariation}

The distributions of Propositions~\ref{prop:T3approx}~and~\ref{prop:T1approx} interpolate 
between the asymptotic distribution at the singularity or boundary, respectively, and the asymptotic $\chi_1^2$ distribution 
far from the singularity or boundary.
The further the true parameter point is from the singularity or boundary, the more accurate the 
$\chi_1^2$ approximation is.

Indeed, while we have shown these approximate distributions for likelihood ratio statistics perform better than the 
asymptotic ones for finite sample sizes near the singularities and boundaries of our example models, it may 
still be desirable to use the asymptotic $\chi_1^2$ distribution for testing sufficiently far from those points. The simpler 
form of these distributions and ready availability in standard software remains attractive. A natural problem, 
then, is how to decide when the simpler distribution is likely to lead to adequate performance in testing.

To approach this question quantitatively, we employ the \emph{total variation distance} between our 
approximate distributions and the $\chi_1^2$. 
The total variation distance 
between two continuous probability distributions $F$, $G$, with densities $f$, $g$, of support $R$, is
\begin{equation*}
\delta\left(F,G\right)=\frac{1}{2}\int_{R}\mid{}f\left(\lambda\right)-{g}\left(\lambda\right)\mid{}d\lambda,
\end{equation*}
which can be interpreted as the maximum absolute difference  of probabilities of events.

Using the distribution of Proposition~\ref{prop:T3approx} or Proposition~\ref{prop:T1approx}, one can choose an 
acceptable upper bound $\epsilon$ on the total variation distance between this distribution and the $\chi_1^2$. 
Then, using a numerical optimization routine, one can determine the values of $\phi_0,n$ for which this bound is not 
exceeded. The $\chi_1^2$ approximation might be considered acceptable for such $\phi_0$ and $n$.

\subsection*{Application to Model T1}

For model T1, the dependence of the distribution 
from Proposition~\ref{prop:T1approx} 
on $\phi_0$ and $n$ is only through $\mu_0=\mu_0\left(\phi_0,n\right)$, so let $F_{\mu_0}$ denote this distribution
viewed as a function of $\mu_0$. From the 
derivation of the density in Appendix~\ref{app2}, it is clear that $\delta\left(F_{\mu_0},\chi_1^2\right)$ is a decreasing function 
of $\mu_0$. It is thus sufficient to determine numerically the value $\tilde \mu_0$ 
for which $\delta\left(F_{\tilde \mu_0},\chi_1^2\right)=\epsilon$. Then $\mu_0>\tilde \mu_0$ characterizes the parameters and 
sample sizes for which  the $\chi_1^2$ approximation might be considered acceptable.

Table~\ref{tab:tableT1} summarizes, for several choices of $\epsilon$, the threshold value $\tilde \mu_0$. It also shows for 
several choices of sample size $n$, the corresponding thresholds $\phi_0 < \tilde \phi_0$
and $t> \tilde t=-\log\left(\tilde\phi_0\right)$, 
since $\mu_0$ is a function of $n$ and $\phi_0$. 
For a given bound $\epsilon$, larger sample sizes allow for shorter internal branches 
of the tree in Figure~\ref{figurecoalescence}, while maintaining the $\chi_1^2$ distribution as a reasonable approximation
for the distribution of the likelihood ratio statistic.

\begin{table}[!htb]
\centering
\caption{For model T1, the threshold values $\tilde \mu_0$ are given for which $\mu_0>\tilde\mu_0$ 
ensures the total variation distance between the distribution of Proposition~\ref{prop:T1approx} and $\chi_1^2$ is less than 
$\epsilon$, for various $\epsilon$.  
For a fixed sample size $n$, the thresholds are also given in terms of $\tilde \phi_0$ or $\tilde t$.}
\label{tab:tableT1}
{\renewcommand{\arraystretch}{1.2}
\begin{tabular}{l|l|l|l|l|l|l}
 & \multicolumn{2}{c|}{$\epsilon=5\times{}10^{-3}$, } & \multicolumn{2}{c|}{$\epsilon=5\times{}10^{-4}$, } & \multicolumn{2}{c}{$\epsilon=5\times{}10^{-5}$, } \\
 & \multicolumn{2}{c|}{$\tilde\mu_0=1.84$} & \multicolumn{2}{c|}{$\tilde \mu_0=2.64$} & \multicolumn{2}{c}{$\tilde \mu_0=3.28$} \\
\hline
\multicolumn{1}{c|}{$n$} & \multicolumn{1}{c|}{$\tilde\phi_0$} & \multicolumn{1}{c|}{$\tilde t$} & \multicolumn{1}{c|}{$\tilde\phi_0$} & \multicolumn{1}{c|}{$\tilde t$} & \multicolumn{1}{c|}{$\tilde\phi_0$} & \multicolumn{1}{c}{$\tilde t$} \\
\hline
$30$ & $0.748$ & $0.291$ & $0.642$ & $0.443$ & $0.565$ & $0.572$ \\
\hline
$10^2$ & $0.863$ & $0.147$ & $0.802$ & $0.220$ & $0.754$ & $0.283$ \\
\hline
$10^3$ & $0.958$ & $0.0429$ & $0.939$ & $0.0626$ & $0.924$ & $0.0787$ \\
\hline
$10^4$ & $0.987$ & $0.01320$ & $0.981$ & $0.0190$ & $0.977$ & $0.0237$ \\
\hline
$10^5$ & $0.996$ & $0.00414$ & $0.994$ & $0.00594$ & $0.993$ & $0.00739$ \\
\hline
$10^6$ & $0.999$ & $0.00130$ & $0.998$ & $0.00187$ & $0.998$ & $0.00233$
\end{tabular}
}
\end{table}

\subsection*{Application to Model T3}

For model T3, the dependence of the density of Proposition~\ref{prop:T3pdf} on parameter $\phi_0$ and sample size $n$ is through both $\mu_0$ and $\alpha_0$. 
However, it is 
clear from the derivation in Appendix~\ref{app1}  that an upper bound on the variation distance 
is obtained by setting $\alpha_0$ to its minimum value, $\alpha_0=\arctan{\left(\frac{1}{3}\right)}$, 
for any value of $\mu_0$. This simplifies the computations and leads to a conservative estimate of the 
threshold $\tilde \mu_0$. Table~\ref{tab:tableT3} summarizes thresholds found in this way. 

\begin{table}[!htb]
\centering
\caption{For model T3, conservative threshold values $\tilde \mu_0$ are given for which $\mu_0>\tilde\mu_0$ ensures the  total variation distance 
between the distribution of Proposition~\ref{prop:T3approx} and $\chi_1^2$ is less than $\epsilon$,  for various $\epsilon$. 
For a fixed sample size $n$, the thresholds are also given in terms of $\tilde \phi_0$ or $\tilde t$.}
\label{tab:tableT3}
{\renewcommand{\arraystretch}{1.2}
\begin{tabular}{l|l|l|l|l|l|l}
 & \multicolumn{2}{c|}{$\epsilon=5\times{}10^{-3}$, } & \multicolumn{2}{c|}{$\epsilon=5\times{}10^{-4}$, } & \multicolumn{2}{c}{$\epsilon=5\times{}10^{-5}$, } \\
 & \multicolumn{2}{c|}{$\tilde \mu_0=2.74$} & \multicolumn{2}{c|}{$\tilde\mu_0=3.64$} & \multicolumn{2}{c}{$\tilde\mu_0=4.40$} \\
\hline
\multicolumn{1}{c|}{$n$} & \multicolumn{1}{c|}{$\tilde\phi_0$} & \multicolumn{1}{c|}{$\tilde t$} & \multicolumn{1}{c|}{$\tilde \phi_0$} & \multicolumn{1}{c|}{$\tilde t$} & \multicolumn{1}{c|}{$\tilde \phi_0$} & \multicolumn{1}{c}{$\tilde t$} \\
\hline
$30$ & $0.629$ & $0.463$ & $0.525$ & $0.645$ & $0.449$ & $0.802$ \\
\hline
$10^2$ & $0.795$ & $0.230$ & $0.727$ & $0.319$ & $0.672$ & $0.398$ \\
\hline
$10^3$ & $0.937$ & $0.0650$ & $0.916$ & $0.0879$ & $0.898$ & $0.108$ \\
\hline
$10^4$ & $0.980$ & $0.0198$ & $0.974$ & $0.0264$ & $0.968$ & $0.0321$ \\
\hline
$10^5$ & $0.994$ & $0.00617$ & $0.992$ & $0.00820$ & $0.990$ & $0.00993$ \\
\hline
$10^6$ & $0.998$ & $0.00194$ & $0.997$ & $0.00258$ & $0.997$ & $0.00312$
\end{tabular}
}
\end{table}

\section{Discussion}\label{sec:discuss}

As the examples of models T1 and T3 illustrate, not only should we expect non-standard asymptotic distributions for 
hypothesis testing at singularities and boundaries of models, but that even near such points the standard $\chi^2$ 
asymptotic distributions may behave poorly when testing. Although increasing sample size may 
lead to better performance at any specific point, the discontinuous behavior of the asymptotic distribution means a 
region of poor performance can remain, though it shrinks in size. While \citet{drton2009} commented that convergence 
to the asymptotics can be slow near a boundary or singularity, we further emphasize that the nonuniformity of the 
rate of convergence poses even more of a problem. 
Unless we have an  \emph{a priori} quantitative bound separating the true parameter from the singularities and boundaries, 
no finite sample size can be found which will lead to uniformly good performance of the standard asymptotic approximation.

Moreover, depending on the model, use of the $\chi^2$ asymptotic approximation may lead to either conservative or anti-conservative tests (or both, in different regions), depending on the geometry of the model beyond the singularity or boundary. Thus no simple rule can be adopted for adjusting one's test.
Theorem~\ref{maintheorem} suggests one alternative approach, of avoiding the approximation of the model by its tangent cone inherent in the derivation of the asymptotic distribution, and using a
different  approximate distribution dependent on both the true parameter and the sample size. For our example models this performed well, as illustrated by our simulations.

Even for our models, there are a number of hypothesis tests not presented here for which Theorem~\ref{maintheorem} will be useful. 
For instance, one may wish to test whether data fits a null hypothesis of a particular tree, model T1, vs. an alternative of the other trees, model T3$\smallsetminus$T1. Failure to reject the null hypothesis for each of the three choices of T1 would, in biological terminology, be interpreted as a soft polytomy,  where an unresolved (star) tree represents ignorance of the true resolution. Similarly, one may wish to test whether data fits a simple hypothesis of an unresolved  tree, $\theta_0=\left(\frac{1}{3},\frac{1}{3},\frac{1}{3}\right),$ vs. an alternative of a resolved tree, model $T_3\smallsetminus \{\theta_0\}$. For this test failure to reject the null hypothesis would, in biological terminology, be interpreted as a hard polytomy, where an unresolved tree represents
what are believed to be true relationships. 

\smallskip
Within phylogenetics, another  possible use of Theorem~\ref{maintheorem} is for conducting hypothesis tests for distance data to fit a tree. An evolutionary distance $d\left(a,b\right)$ is typically a numerical measure of the amount of mutation between two species $a$ and $b$, and under certain modeling assumptions should in expectation match the sum of lengths of branches between them on a tree.
The $3$-point condition states that for an ultrametric tree to exist relating species $a$, $b$, $c$, the expectations of
 $d\left(a,b\right)$, $d\left(a,c\right)$, and $d\left(b,c\right)$
 must have the two largest equal, with the smallest pair indicating the correct tree topology.
 This is similar to models T1 and T3, with the inequality reversed, except that the distances may have any non-negative values. Again the model has a singularity or boundary.
 
Several works \cite{gu1996bias,massingham2007statistics} have proposed statistical tests involving distances. For instance, \citet{gu1996bias} tested the $3$-point condition by focusing on the difference of the two distances that are assumed to be equal under $H_0$. Arguing that this difference is asymptotically normally distributed, a $Z$-test is performed. However, when all three distances are near equal, as they would be near the singularity or boundary point corresponding to a star tree, this test becomes inaccurate, as the smallest value may well not correspond to the true topology. Just as with models T1 and T3, the test could either be anti-conservative or conservative, depending on whether the null hypothesis was of a specific $3$-species ultrametric tree or of any of the three possible trees, respectively.

\medskip

Our example models have rather special structure making them amenable to our approach. Since $\Theta_0$ was locally linear, except at the singularities and boundaries, we were able to compute explicit density functions for the relevant distributions, so that using them was no more difficult than using a $\chi^2$. Our examples also had the interesting feature that in the biological application one is often most interested in data around the singularity or boundary, and so effective hypothesis testing in that region is of special concern.
Although we do not believe similar calculations of our approximate finite sample distribution will be tractable for all models, there are likely to be some where this approach will prove useful.  For models that are not amenable to such calculations, special attention still needs to be paid near singularities and boundaries, perhaps through the use of parametric bootstrapping to obtain approximations of the distribution. 

\smallskip

With a broader perspective, Theorem~\ref{maintheorem} suggests that whenever the asymptotic distribution performs badly for hypothesis testing, one might do better by using a distribution taking the local geometry of the model into account in a more subtle way than just through the tangent cone. For instance, if a model were described by a curve in the plane, one should expect that even at regular points the asymptotic distribution may be less useful in regions of high curvature, where the tangent cone approximation of the model is poor. However, unlike in the case of singularities or boundaries one should be able to work out a sample size ensuring a reasonable fit by a $\chi^2$, as long as the curvature is bounded. If obtaining a data set of that size is not possible, then 
even if the distribution of Theorem~\ref{maintheorem} cannot be computed, first approximating the model by a simpler curve with similar curvature, such as an appropriate polynomial, and then using the theorem might lead to a better distribution for hypothesis testing. Failing that, parametric bootstrapping again remains an option.

\section*{Acknowledgements}

This work is supported by the US National Institutes of Health grant R01 GM117590, awarded under the Joint DMS/NIGMS Initiative to Support Research at the Interface of the Biological and Mathematical Sciences.

\appendix

\section{The multispecies coalescent model}\label{app:MSC}

We briefly introduce the multispecies coalescent model, which underlies models T1 and T3 of Examples~\ref{ex:1tree}~and~\ref{ex:3tree}.
This model, introduced by \citet{PamiloNei1988} (see also \citep{RannalaYang2003}), extends the Kingman coalescent model of population genetics, from applying to a single population, to a tree of populations, called a species tree. It is the fundamental model of the biological process of \emph{incomplete lineage sorting}, by which gene trees of sampled lineages can fail to match the structure of the tree relating species overall. Incomplete lineage sorting is one of several processes that can make inference of species relationships from genetic data difficult. An example of a single such gene tree sampled for a particular species tree is shown in Figure~\ref{figurecoalescence}.

The Kingman coalescent models a finite number of lineages, traced backward in time within a single population, as they merge, or coalesce, at common ancestors.
The most convenient time scale is in  coalescent units $t$, where $\Delta t=\Delta \tau/N\left(\tau\right)$, with time $\tau$ measured in number of generations and $N\left(\tau\right)$ the population size. In these units, if $k$ lineages are sampled, the time to the first coalescence of the first pair of lineages
is exponentially distributed with rate $\binom k2$. The pair that coalesces is then chosen uniformly at random. Then the coalescent process begins again with one less lineage, and hence rate $\binom {k-1}2$. \citet{wakeley2009coalescent} provides a comprehensive introduction to this model.

While in population genetics, one often views the Kingman model as running until all lineages coalesce to a single one, in the multispecies coalescent that may not happen within a single population, which has a finite duration.

\medskip

The parameters of the multispecies coalescent model are a rooted metric species tree, with branch lengths given in coalescent units. The branches of the species tree should be thought of as representing unstructured populations, which stretch back in time until they merge with another population. We also consider a population ancestral to the root of the species tree, which
is considered to have infinite length, so that
lineages in it coalesce into one with probability $1$. 

Specific finite numbers of lineages are to be sampled from each species' population at the leaves of the tree. Then the Kingman coalescent model applies for the duration of that population to its parental node in the tree. At that point, there are fewer lineages if any coalescent event occurred, but we gain more lineages from the other branch of the species tree which descends from that node. The combined collection of lineages then starts a new coalescent process on the branch leading towards the root. Continuing in this way, eventually a finite number of lineages reach the root, where a final Kingman coalescent process leads to a rooted metric gene tree. Finally, ignoring branch lengths yields a sampled rooted topological gene tree.

\medskip
While for species trees with many species it is difficult to compute the probability of any gene tree (e.g., \citet{rosenberg2002probability}),
in the applications based on models T1 and T3, the species tree has only three species, and only one lineage is sampled from each. With only one lineage per species, coalescence can occur only in the internal branch of the tree or ``above-the-root'', and not in any terminal branch. Thus the only relevant branch length is the internal one.

Suppose that the true species tree is a rooted three species tree $\left(\left(a,b\right)\tc t,c\right)$, as shown in Figure~\ref{figurecoalescence}. There are three possible gene tree topologies, \begin{equation*}AB|C, \ AC|B,\ BC|A.\end{equation*} 

In this case, the probability of gene trees discordant from the species tree are easiest to compute. For instance the gene trees $AC|B$ and $BC|A$ can only form if no coalescence occurs except above the root. From the exponential distribution of coalescent times, the probability of no coalescence of two lineages in a branch of length $t$ is $e^{-t}$. Then, with three lineages present at the root, due to the exchangeability of lineages, the formation of each of the three rooted trees must have equal probability of $\frac 13$. Thus
$p_{AC|B}=\frac 13 e^{-t}$. The same argument gives
$p_{BC|A}=\frac 13 e^{-t}$, which thus implies
$p_{AB|C}=1-\frac 23 e^{-t}$.

\section{Model T3}
\label{app1}

Here we prove Proposition~\ref{prop:T3approx} and Proposition~\ref{prop:T3pdf} concerning the model T3.

\smallskip

For a fixed sample size, multinomial distributions form a regular exponential family if $\widetilde{\Theta}=\Delta$ is the open simplex. The regularity conditions of \citet{drton2009} are then satisfied, and thus Theorem~\ref{maintheorem} applies.

Since $\widetilde{\Theta}=\Delta^2$ lies on a plane in $\mathbb{R}^{3}$, we first apply an affine transformation $M:\mathbb{R}^{3}\rightarrow{}\mathbb{R}^{2}$, 
\begin{equation*}M=\begin{pmatrix}
0 & -\frac{1}{\sqrt{2}} & \frac{1}{\sqrt{2}}\\ 
\sqrt{\frac{2}{3}} & -\frac{1}{\sqrt{6}} & -\frac{1}{\sqrt{6}}
\end{pmatrix},\end{equation*}
to map $\widetilde{\Theta}$ isometrically to the plane, sending the singularity to the origin.  This maps a true parameter point, say $\theta_0 = \theta_0^{\left(1\right)} = \left(1-\frac{2}{3}\phi_{0}, \:\frac{1}{3}\phi_{0}, \:\frac{1}{3}\phi_{0}\right)$
without loss of generality, to $\left(0, \:\sqrt{\frac{2}{3}}\left(1-\phi_{0}\right)\right)$.  
Computing the Fisher information matrix $\mathcal I \left(\theta_0\right)$ for a sample of size $n = 1$ for $\theta_0$ in planar coordinates, we obtain the transformation matrix
\begin{equation*}\sqrt{n}\mathcal I\left(\theta_{0}\right)^{\frac{1}{2}}=\begin{pmatrix}
\sqrt{\frac {3n}{ \phi_{0} }}&0\\ 
0& \sqrt{\frac {3n}{ \phi_{0}  \left(  3-2\phi_{0}\right) }}
\end{pmatrix},\end{equation*}
which we apply to the planar image of $\Delta^2$.
The point $\theta_0$ is mapped to $\left(0,\mu_0\right)$ with
\begin{equation*}
\mu_{0}=\sqrt{2n}\frac{1-\phi_{0}}{\sqrt{\phi_{0}\left(3-2\phi_{0}\right)}}.
\end{equation*}

Under these transformations the null parameter space $\Theta_0$ is mapped non-conformally, provided
$\phi_0\in\left(0,1\right)$, to three line segments emanating
from the origin, one to $\left(0,\sqrt{\frac{2n}{\phi_0\left(3-2\phi_0\right)}} \right)$ passing through the true parameter point
$\left(0,\mu_0\right)$, and others to $\left(\pm \sqrt{\frac{3n}{2\phi_0}},-\sqrt{\frac{n}{2\phi_0 \left(3-2\phi_0\right)}}\right)$. 
(The parameter value $\phi_0=1$ corresponds to the singularity in $\Theta_0$ and the transformation is conformal
in this instance.)  The full parameter space $\Delta^2$ is mapped to the interior of the convex hull of the three
points given above.  See Figure~\ref{transformedSimplex}.

The angle $\alpha_0 > 0$ formed between the positive $x$-axis and 
the line segment joining the origin to $\left(\sqrt{\frac{3n}{2\phi_0}},-\sqrt{\frac{n}{2\phi_0 \left(3-2\phi_0\right)}}\right)$, as shown is Figure~\ref{transformedSimplex}, is
$\alpha_0 = \arctan \left(\frac{1}{\sqrt{3\left(3-2\phi_0\right)}}\right)$, and varies from $\frac{\pi}{6}$ for $\phi_0=1$ down to $\arctan\left(\frac 1 3\right)$ as $\phi_0 \to 0$.  Letting $\gamma_0 = \tan\left(\alpha_0\right)$, in the transformed space the image of $\Theta_0$ is contained in the union of the half-lines $\left\{\left(0,y\right) \mid y \ge 0\right\}$ and $y=-\gamma_0\sgn\left(x\right)x$.

By Theorem~\ref{maintheorem}, the approximate distribution of the likelihood ratio statistic is the distribution of the minimum squared Euclidean distance between a normal sample, $\mathcal{N}\left(\left(0,\mu_{0}\right),I\right)$, and three line segments in the transformed space. 
Assuming that $\theta_0$ is not too close to the boundary of $\widetilde{\Theta}_0$ in a sense dependent on sample size, little of the mass of $\mathcal{N}\left(\left(0,\mu_{0}\right),I\right)$ is outside the image of the simplex.
Thus, for the remainder of the argument, we replace these line segments with half-lines emanating from the singularity $\left(0,0\right)$.

Denote the marginal probability distributions of the bivariate normal sample by $Z\sim{}\mathcal{N}\left(0,1\right)$ and $\bar Z\sim{}\mathcal{N}\left(\mu_{0},1\right)$.
We next determine the minimum squared distance of a sample point $\left(Z, \bar Z\right)$ 
to the three half-lines.

Consider first the half-line $\left\{\left(0,y\right) \mid y \geq 0\right\}$.  If $\bar Z$ is non-negative, then the squared Euclidean distance is $Z^{2}$, while if
$\bar Z$ is negative, then the squared distance is  $Z^{2}+\bar{Z}^{2}$. 
Thus the squared Euclidean distance is 
\begin{equation}
Z^{2}+\frac{1}{2}\left(1-\sgn\left(\bar{Z}\right)\right)\bar{Z}^{2}.\label{eq:firstArg}
\end{equation}

Now consider the half-lines $y=-\gamma_0\sgn\left(x\right)x$ and denote the closest point to $\left(Z, \bar Z\right)$
by $\left( X,-\gamma_0\sgn{\left(X\right)}X\right)$. Assuming $X\ne 0$, then
$\sgn\left(X\right)=\sgn\left(Z\right)$, and minimizing 
\begin{equation*}
\left(Z-X\right)^{2}+\left(\bar{Z}+\gamma_0\sgn\left(Z\right)X\right)^{2}
\end{equation*}
yields
\begin{equation*}
X=\frac{1}{1+\gamma_0^2}\left(Z-\gamma_0\sgn\left(Z\right)\bar{Z}\right).
\end{equation*}
Substituting into the previous expression gives the squared distance
\begin{equation}
\label{eq:sqdist2}
\frac{\gamma_0^2}{1+\gamma_0^2}\left (Z+\frac 1{\gamma_0}\sgn\left(Z\right)\bar Z\right )^2=\sin^2\alpha_0\left (Z+\cot\alpha_0\sgn\left(Z\right)\bar Z\right )^2.
\end{equation}

In the case $X=0$, 
the closest point to the two half lines is the origin. This can occur only when $\bar Z \ge 0$, so the squared distance to the two half-lines is at least $Z^2$, which is 
the squared distance to the vertical half-line given in Equation~\eqref{eq:firstArg}.  Moreover,  it can be shown that $Z^2$ is at most the value given 
in Equation~\eqref{eq:sqdist2} in this case.

It follows that the approximate distribution of the likelihood ratio statistic is that of the random variable
\begin{equation*}
\widetilde{\Lambda}_n = \min\left(Z^{2}+\frac{1}{2}\left(1-\sgn\left(\bar{Z}\right)\right)\bar{Z}^{2}, ~ \sin^2\alpha_0\left (Z+\cot\alpha_0\sgn\left(Z\right)\bar Z\right )^2\right),
\end{equation*}
as given in Proposition~\ref{prop:T3approx}.

\medskip

To determine the probability density function for the approximate distribution of Proposition~\ref{prop:T3approx}, we let
$G_X\left(x\right)$ denote the cumulative distribution function of the (non-squared) Euclidean distance.  This 
can be found by integrating the bivariate normal distribution $\mathcal{N}\left(\left(0,\mu_{0}\right),I\right)$ over the tube of points within distance $x$ from the
transformed $\Theta_0$, as shown in Figure~\ref{figure4}. 
Calculations are simplified by the fact that the tubular region in Figure~\ref{figure4} has bilateral symmetry, as does the normal distribution. 

Due to symmetry, we need only integrate over the shaded regions in the figure. Let $L_i = L_i\left(x\right)$ denote the half-lines forming the outer boundaries of these regions.
Denote by $\beta_0$ the angle formed by the line segment joining the origin to the point of intersection of $L_1$ and $L_2$.
This angle has measure $\beta_0 = \frac{1}{2}\left(\frac{\pi}{2}-\alpha_0\right)$.

With this setup,
$G_X\left(x\right)= 2 \left(G_1\left(x\right)+G_2\left(x\right)+G_3\left(x\right)\right)$, where
$G_i$ is the integral over the shaded strip $R_i$, and the density of the Euclidean distance is
\begin{equation*}g_X\left(x\right)=2\left(\frac d{dx} G_1\left(x\right) +\frac d{dx} G_2\left(x\right)+\frac d{dx} G_3\left(x\right)\right ).\end{equation*}

\begin{figure}[!htb]
	\centering
		\begin{tikzpicture}[xscale=1]
        \centering
				\draw[-] (2.51,0) -- (2.51,-2.67);
				\draw[-] (2.51,-2.67) -- (0,-3.58);
				\draw[-] (2.51,-2.67) -- (5.02,-3.58);
				\draw[-,dashed] (0,-4.64) -- (2.51,-3.73);
				\draw[-,dashed] (2.51,-3.73) -- (5.02,-4.64);
				\draw[-,dashed] (1.51,0) -- (1.51,-1.97);
				\draw[-,dashed] (0,-2.52) -- (1.51,-1.97);
				\draw[-,dashed] (3.51,0) -- (3.51,-1.97);
				\draw[-,dashed] (3.51,-1.97) -- (5.02,-2.52);
				\draw[<->] (2.51,-1) -- (3.51,-1);
				\draw[-,dotted] (2.51,-2.67) -- (4.33,-1.40);
				\draw[-,dotted] (2.51,-2.67) -- (6.02,-2.67);
				\node at (3.01,-0.25) {$R_{1}$};
				\node at (4.52,-2.87) {$R_{2}$};
				\node at (4.52,-3.93) {$R_{3}$};
				\tkzDefPoint(6.02,-2.67){C}
				\tkzDefPoint(2.51,-2.67){B}
				\tkzDefPoint(5.02,-3.58){A}
				\tkzDefPoint(3.51,-1.97){D}
				\tkzDefPoint(2.51,0){E}
				\tkzMarkAngle[arrows=<->](A,B,C)
				\tkzMarkAngle[arrows=<->](C,B,D)
				\tkzLabelAngle[pos=0.6](A,B,C){$\alpha_0$}
				\tkzLabelAngle[pos=0.6](C,B,D){$\beta_0$}
				\tkzMarkRightAngle(E,B,C)
				\fill[fill=gray!50,nearly transparent]  (2.51,-2.67) -- (3.51,-1.97) -- (3.51,0) -- (2.51,0) -- cycle;
				\fill[fill=gray!100,nearly transparent]  (5.02,-2.52) -- (3.51,-1.97) -- (2.51,-2.67) -- (5.02,-3.58) --  cycle;
				\fill[fill=gray!150,nearly transparent]  (5.02,-3.58) -- (2.51,-2.67) -- (2.51,-3.73) -- (5.02,-4.64) -- cycle;
				\draw (2.51,-1.5) node[circle,fill,inner sep=1pt,label=left:{$\left(0,\mu_{0}\right)$}]{};
				\draw (3.01,-1) node[label=above:{$x$}]{};
				\draw (3.51,-0.99) node[label=right:{$L_1$}]{};
				\draw (4.27,-2.25) node[label=above:{$L_2$}]{};
				\draw (3.77,-4.19) node[label=below:{$L_3$}]{};
				\end{tikzpicture}
        \caption{The region of integration for $G_X\left(x\right)$ is between the dashed lines. The integral is evaluated as three integrals, over each of the shaded regions $R_i$.}
		\label{figure4}
\end{figure}
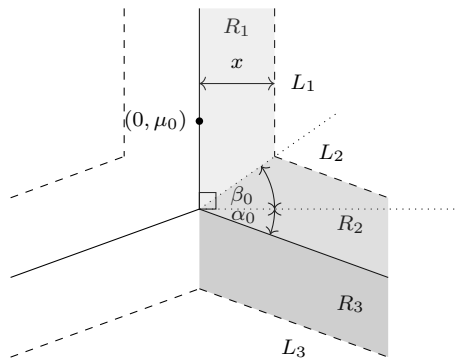

Considering $\frac d{dx} G_1 \left(x\right)$ first, one sees that this derivative is the integral of the normal density over boundary $L_1$. We show this formally using polar coordinates:
\begin{align*} 
\frac {d}{dx} G_{1}\left(x\right)
=&\int_{\beta_0}^{\frac{\pi}{2}}\frac{d}{dx}\int_{0}^{\frac{x}{\cos \beta}}\frac{1}{2\pi}\exp\left ({-\frac{1}{2}\left(r^{2}-2\mu_{0}r\sin\beta+\mu_{0}^{2}\right)}\right)r \,dr\,d\beta \\
=&\int_{\beta_0}^{\frac{\pi}{2}}
\frac{1}{2\pi}\exp\left ({-\frac{1}{2}\left({\frac{x^2}{\cos^2 \beta}}-2\mu_{0}{\frac{x}{\cos \beta}}\sin\beta+\mu_{0}^{2}\right)}\right) {\frac{x}{\cos^2 \beta}}\,d\beta\\
=&\frac{1}{2\pi}\exp\left (-\frac{1}{2} {x^2}\right ) \\
&\quad\int_{\beta_0}^{\frac{\pi}{2}}
\exp\left (-\frac{1}{2}\left ( x^2 \tan^2 \beta-2\mu_{0}x\tan \beta +\mu_{0}^{2}\right)\right) {\frac{x}{\cos^2 \beta}}\,d\beta.
\end{align*}
Substituting $y=x\tan\beta$ gives
\begin{align*}
\frac {d}{dx} G_{1}\left(x\right)
&=\frac{1}{2\pi}\exp\left (-\frac{1}{2} {x^2}\right )
\int_{x\tan\beta_0}^{\infty}
\exp\left (-\frac{1}{2}\left ( y-\mu_{0}\right)^2\right) dy
\\
&=\frac{1}{2\sqrt{2\pi}}\exp \left ({-\frac{1}{2}x^{2}}\right )\left(1-\erf\left(\frac{1}{\sqrt{2}}\left ( x\tan  \beta_0 -\mu_{0}\right)\right)\right).
\end{align*}

\smallskip

More briefly, over $R_2$ we have
\begin{equation*}
\frac d{dx}G_2\left(x\right)=\int_{L_2} f \left(z, \bar z\right) dx,
\end{equation*}
where $f$ is the density of the bivariate normal.
To evaluate this, we reflect about the line $y = \tan\beta_0 \, x$, mapping the mean of the Gaussian to
$\left(\mu_0\cos \alpha_0,-\mu_0 \sin \alpha_0\right)$, and sending $L_2$ to a vertical half-line $\left(x,y\right)$, with $y\ge x\tan \beta_0$, so
\begin{align*}
\frac d{dx}G_2\left(x\right)=&\int_{x\tan\beta_0}^\infty
\frac 1{2\pi}\exp\left (-\frac 12 \left (\left(x-\mu_0\cos\alpha_0\right)^2+\left(y+\mu_0\sin\alpha_0\right)^2\right )\right ) dy
\\
=&
\frac{1}{2\sqrt{2\pi}}\exp\left ( {-\frac{1}{2}\left(x-\mu_{0}\cos\alpha_0\right)^{2}}\right ) \\
&\quad\left( 1-\erf\left(\frac{1}{\sqrt{2}}\left(x\tan\beta_0 +\mu_{0}\sin\alpha_0\right)\right)\right).
\end{align*}
Finally, since the same reflection maps $L_3$ to the vertical half-line $\left(-x,y\right)$ with $y\ge x\tan \alpha_0$,
\begin{align*}
\frac d{dx}G_3\left(x\right)
=&
\int_{x\tan\alpha_0}^\infty \frac 1{2\pi}\exp\left (-\frac 12\left ( \left(-x-\mu_0\cos\alpha_0\right)^2+ \left(y+\mu_0\sin\alpha_0\right)^2 \right )\right ) dy\\
=&\frac{1}{2\sqrt{2\pi}}\exp \left ( {-\frac{1}{2}\left(x+\mu_{0}\cos\alpha_0\right)^{2}}\right ) \\
&\quad\left(1-\erf\left(\frac{1}{\sqrt{2}}\left(x\tan\alpha_0+\mu_{0}\sin\alpha_0\right)\right)\right).
\end{align*}

Summing these three expressions and multiplying by $2$, we obtain the density $g_{X}\left(x\right)$ for the distance. 
After a change of variable to convert to the squared Euclidean distance, the random variable $\tilde \Lambda_n$ has
density function 
\begin{align*}
&f_{\widetilde{\Lambda}_n}\left(\lambda\right)=
\frac{1}{2\sqrt{2\pi{}\lambda}}
\biggl [ 
\exp \left ({-\frac{1}{2}\lambda}\right )\left(1-\erf\left(\frac{1}{\sqrt{2}}\left ( \sqrt \lambda\tan  \beta_0 -\mu_{0}\right)\right)\right)
\\&+
\exp \left ({-\frac{1}{2}\left(\sqrt \lambda-\mu_{0}\cos\alpha_0\right)^{2}}\right )\left(1-\erf\left(\frac{1}{\sqrt{2}}\left(\sqrt \lambda\tan\beta_0 +\mu_{0}\sin\alpha_0\right)\right)\right)
\\&+
\exp\left ( {-\frac{1}{2}\left(\sqrt \lambda+\mu_{0}\cos\alpha_0\right)^{2}}\right )\left(1-\erf\left(\frac{1}{\sqrt{2}}\left(\sqrt \lambda\tan\alpha_0+\mu_{0}\sin\alpha_0\right)\right)\right)
\biggr ],
\end{align*}
as given in Proposition~\ref{prop:T3pdf}.

\section{Model T1}
\label{app2}

We now prove Propositions~\ref{prop:T1approx}~and~\ref{prop:T1pdf}, using the transformation and notation of Appendix~\ref{app1}.   
Proposition~\ref{prop:T1approx} follows immediately by a 
simple modification to the argument in Appendix~\ref{app1}.  See Equation~\eqref{eq:firstArg}.

For Proposition~\ref{prop:T1pdf}, let  $g_X\left(x\right)$ denote the probability density function for the (non-squared) distance $x$ between
a sample point $\left(Z,\bar Z\right)$ and the mean $\left(0,\mu_0\right)$.  Then $g_X\left(x\right) = \frac d{dx} G_X\left(x\right)$ is given by the integral of the Gaussian over
the dashed curves shown in Figure~\ref{figure6}.  To compute this, we integrate over the dashed boundaries of $R_1$ and $R_2$ depicted 
in the figure.  By symmetry, 
\begin{equation*}g_X\left(x\right)=\frac d{dx} G_X\left(x\right)=2\frac d{dx} G_1\left(x\right) +\frac d{dx} G_2\left(x\right),\end{equation*}
where $G_i$ is the contribution to the cdf over region $R_i$.

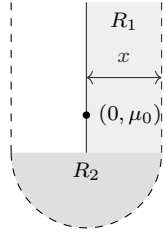
\begin{figure}[!htb]
	\centering
		\begin{tikzpicture}[xscale=1]
        \centering
				\draw[-] (1,0) -- (1,-2);
				\draw[-,dashed] (0,0) -- (0,-2);
				\draw[-,dashed] (2,0) -- (2,-2);
				\draw[<->] (1,-1) -- (2,-1);
				\draw (1.5,-1) node[label=above:{$x$}]{};
				\fill[fill=gray!50,nearly transparent]  (1,0) -- (1,-2) -- (2,-2) -- (2,0) -- cycle;
				\fill[fill=gray!100,nearly transparent] (1,-2) -- (0mm,-20mm) arc (180:360:1cm);
				\draw[-,dashed] (0,-2) arc (180:360:1cm and 1cm);
				\draw (1,-1.5) node[circle,fill,inner sep=1pt,label=right:{$\left(0,\mu_{0}\right)$}]{};
				\node at (1.5,-0.25) {$R_{1}$};
				\node at (1,-2.25) {$R_{2}$};
				\end{tikzpicture}
        \caption{The region of integration for model T1.}
		\label{figure6}
\end{figure}

For $R_1$, 
\begin{align*}
\frac d{dx} G_1\left(x\right) &= \int_0^\infty \frac 1{2\pi} \exp \left(-\frac 1 2 \left(x^2 + {(y-\mu_0)}^2 \right) \right) dy\\
&=\frac {1}{2\pi} x\exp\left (-\frac 12 \left (x^2+\mu_0^2\right )\right )\int_\pi^{2\pi}
\exp\left (\mu_{0}x\sin\theta\right ) d\theta \\
\end{align*}

On $R_2$, using polar coordinates and $C_2$ for the dashed semi-circle, we find
\begin{align*}
\frac d{dx} G_2\left(x\right) &= \int_{C_2} \frac 1{2\pi} \exp \left(-\frac 1 2 \left(x^2 + {(y-\mu_0)}^2 \right) \right) d\sigma\\
&=\frac {1}{2\pi} x\exp\left (-\frac 12 \left (x^2+\mu_0^2\right )\right )\int_\pi^{2\pi}
\exp\left (\mu_{0}x\sin\theta\right ) d\theta \\
&=-\frac {1}{2} x\exp\left (-\frac 12 \left (x^2+\mu_0^2\right )\right )M_0\left(\mu_0x\right),
\end{align*}
where the last line is obtained after a change of variables, and $M_0\left(z\right)$ is the modified Struve function $11.5.5$ from
 \citet{olver2010nist}.

\medskip

Summing, and making a change of variable $x = \sqrt{\lambda}$, we find the probability density function for $\widetilde \Lambda_n$ is
\begin{align*}
f_{\widetilde{\Lambda}_n}\left(\lambda\right)=\frac{1}{4} \exp\left ({-\frac{\lambda}{2}}\right )\left[
\sqrt{\frac{2}{\pi \lambda}}\left(1+\erf\left(\frac{\mu_{0}}{\sqrt{2}}\right)\right)-\exp \left ( {-\frac{\mu_{0}^{2}}{2}}\right )M_{0}\left(\mu_{0}{}\sqrt{\lambda}\right)\right],
\end{align*}
as given in Proposition~\ref{prop:T1pdf}.

\bibliographystyle{imsart-nameyear}
\bibliography{bibliography}

\end{document}